\documentclass[11pt,a4paper,dvipsnames,eqrno]{article}
\usepackage[utf8]{inputenc}
\usepackage{a4wide}

\usepackage{graphicx}
\usepackage{amssymb}
\usepackage{amsmath}
\usepackage{amsthm}
\usepackage{thmtools}
\usepackage{mathtools}
\usepackage{multirow}
\usepackage{blkarray}
\usepackage[dvipsnames]{xcolor}
\usepackage{etoolbox}
\usepackage{enumerate}
\usepackage{enumitem}
\usepackage{verbatim}
\usepackage{comment}
\usepackage{array}
\usepackage{dsfont}
\usepackage{soul}

\usepackage{relsize}
\usepackage[pdftex, colorlinks, linkcolor=blue, citecolor=blue, urlcolor=blue]{hyperref}
\usepackage[nameinlink]{cleveref}
\hypersetup{linkcolor=MidnightBlue, citecolor=PineGreen, urlcolor=Emerald}
\usepackage{pgfplots}
\usepackage[margin=1cm]{caption}
\usepackage{subcaption}
\pgfplotsset{width=7cm, compat=1.10}
\setlength {\marginparwidth }{2cm}

\newcommand{\R}{\mathbb{R}}

\newcommand{\C}{\mathbb{C}}
\newcommand{\CP}{\mathbb{CP}}
\newcommand{\N}{\mathbb{N}}
\newcommand{\Q}{\mathbb{Q}}
\newcommand{\1}{\mathds{1}}
\newcommand{\pr}{p_{\mathbb{R}}}
\newcommand{\pc}{p_{\mathbb{C}}}
\newcommand{\qr}{q_{\mathbb{R}}}
\newcommand{\qc}{q_{\mathbb{C}}}

\newcommand{\invp}{B^\iota_d}
\newcommand{\invptwo}{B^\iota_2}
\newcommand{\invpthree}{B^\iota_3}

\DeclareMathOperator{\Wd}{W_2^2}
\DeclareMathOperator{\cost}{cost}
\DeclareMathOperator{\supp}{supp}
\DeclareMathOperator{\Sym}{Sym}
\DeclareMathOperator{\Aut}{Aut}
\DeclareMathOperator{\Wdeg}{WDdegree}

\theoremstyle{definition}
\newtheorem{theorem}{Theorem}[section]
\newtheorem{corollary}[theorem]{Corollary}
\newtheorem{proposition}[theorem]{Proposition}
\newtheorem{definition}[theorem]{Definition}

\newtheorem{example}[theorem]{Example}
\newtheorem{lemma}[theorem]{Lemma}
\newtheorem{remark}[theorem]{Remark}

\setlength\parindent{0pt}
 
\title{\textbf{The algebraic degree of the Wasserstein distance}}
\author{Chiara Meroni, Bernhard Reinke, and Kexin Wang}
\date{}

\begin{document}

\maketitle

\begin{abstract}
    Given two rational univariate polynomials, the Wasserstein distance of their associated measures is an algebraic number. We determine the algebraic degree of the squared Wasserstein distance, serving as a measure of algebraic complexity of the corresponding optimization problem. The computation relies on the structure of a subpolytope of the Birkhoff polytope, invariant under a transformation induced by complex conjugation.
\end{abstract}

\section{Introduction}

Given two univariate polynomials $p,q \in \Q[z]$ of degree $d$ with simple roots, consider the optimization problem
\begin{equation}\label{eq:opt_problem}
    \hbox{minimize } \frac{1}{d} \sum_{i=1}^d \| \alpha_i - \beta_{\sigma(i)} \|^2 \quad \hbox{ over } \sigma \in \Sym(d),
\end{equation}
where $\alpha_1, \ldots, \alpha_d$ and $\beta_1,\ldots,\beta_d$ are the roots of $p$ and $q$ respectively. Here $\|\cdot\|$ denotes the Euclidean norm and $\Sym(d)$ is the set of permutations of $d$ elements. The optimal value of \eqref{eq:opt_problem} is the square of the Wasserstein distance of the measures associated to $p$ and $q$. It follows from the definition that this minimum is an algebraic number, namely it is a root of a univariate polynomial with rational coefficients. There exists a unique such polynomial that is monic and of minimal degree; we denote the minimal degree by $\Wdeg(p,q)$. We will refer to $\Wdeg(p,q)$ as the \textit{Wasserstein distance degree} of $p$ and $q$. The goal of this paper is to provide a formula for $\Wdeg(p,q)$. We propose this algebraic approach to an intrinsically metric problem in the spirit of \textit{metric algebraic geometry} \cite{BKS23:MetricAlgGeom}, confident that the interplay between the different points of view will bring new insights for both communities.

The Wasserstein distance is a notion of distance between probability distributions on a metric space. It can be thought of as the cost of turning one distribution into another, and it is therefore intimately connected to optimal transport problems \cite[Chapter 7]{Villani03:TopicsOptTrans}. The optimization problem \eqref{eq:opt_problem} is a special case of the assignment problem \cite[Chapter 17]{Schrijver03:CombinatorialOptimization}, a well-studied problem in combinatorial optimization.

Adopting a broader perspective, we can interpret \eqref{eq:opt_problem} as a polynomial optimization problem where we minimize a polynomial with rational coefficients in $2d$ variables under $2d$ constraints:
\begin{equation*}
    \hbox{minimize } f(x_1, \ldots,x_d, y_1,\ldots,y_d) \; \hbox{ subject to } f_k(x_1, \ldots,x_d) = 0, \, g_k(y_1,\ldots,y_d) = 0,
\end{equation*}
where $f(x_1, \ldots,x_d, y_1,\ldots,y_d) = \frac{1}{d} \sum_i \|x_i-y_i\|^2$ and $f_k, g_k$ are Vieta's formulae \cite{Vieta46:OperaMatematica} for $k=1,\ldots,d$, namely the equations of degree $k$ that relate the roots of $p,q$ to their coefficients. It is of general interest to compute the algebraic degree of the optimal solution of a polynomial optimization problem. Indeed, this degree encodes the \textit{algebraic complexity} required to write the optimum \textit{exactly} in terms of the input.  
This concept has been investigated in the literature in several contexts, e.g., semidefinite programming \cite{GvBR09:AlgDegreeSDP,NRS10:AlgDegSDP,RosStu10:DualitiesConvexAlgGeom,HGV23:CharacterizationAlgDegSDP}. Alternatively, when the objective function is the squared Euclidean distance \cite{DHOST16:EDdegreeAlgVariety,OttSod20:DistRealAlgVariety}, the algebraic degree of the minimum (ED degree) measures the complexity of optimizing the Euclidean distance over an algebraic variety, problem arising in many applications. Of similar fashion is the maximum likelihood degree, central in algebraic statistics \cite{HKS05:SolvingLikelihoodEqns,HuhStu14:LikelihoodGeom,Sul18:AlgStatistics}.

Algebraic techniques allow us to compute the algebraic degree of the optimization problem when the constraints are generic, see \cite{NieRan09:AlgDegPolyOpt}. However, in many relevant scenarios (e.g., those mentioned above), the imposed conditions are not generic enough. Then, \cite[Theorem 2.2]{NieRan09:AlgDegPolyOpt} provides an upper bound for the algebraic degree. Our case fits in this family of nongeneric problems, and we can conclude that $\Wdeg(p,q) \leq (d!)^2$ for any $p,q$ as in \eqref{eq:opt_problem}. As anticipated, this bound is not tight, and we will refine it in what follows.

We begin with the setup in \Cref{sec:setting} where we define the Wasserstein distance for measures, and then interpret it in terms of associated polynomials, along the lines of \cite{ACL23:OptTransportAlgHypers}. In \Cref{sec:Birkhoff}, we introduce a transformation of the Birkhoff polytope induced by complex conjugation and study its invariant subpolytope (\Cref{def:invBirkhoff}). 
We give a graph interpretation of its vertices and characterize them in \Cref{thm: vertices of gamma} and \Cref{prop:realization}. Exploiting the combinatorial information together with standard Galois theory, we obtain the formulae for the minimal polynomial and the algebraic degree of the squared Wasserstein distance in \Cref{thm:algebraic_degree,thm: bound sharp on dense set}. In \Cref{subsec:elimination}, we interpret the computation of the minimal polynomial via an elimination of ideal and describe the algorithm to compute it.

We conclude the introduction with an example.
\begin{example}\label{ex:intro}
    \begin{figure}[ht]
        \centering
        \includegraphics[width=0.35\textwidth]{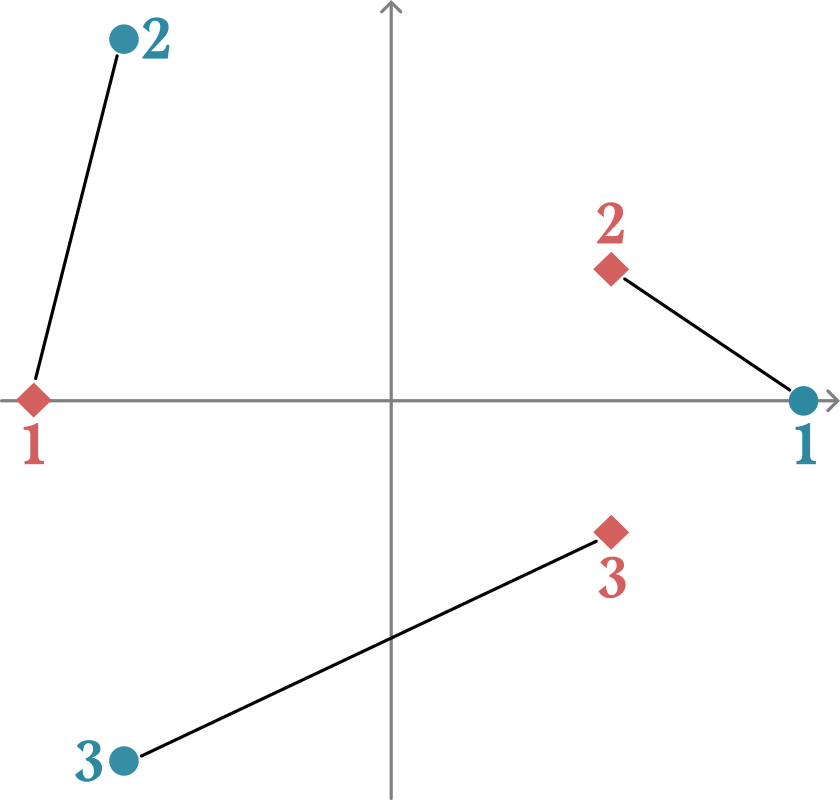}
        \caption{The roots of $p$ (blue dots) and the roots of $q$ (red diamonds), from \Cref{ex:intro}. The segments joining them represent a pairing that realizes the minimum for the Wasserstein distance of $p,q$.}
        \label{fig:ex_pq_intro}
    \end{figure}
    Consider the univariate cubic polynomials $p(z) = 2 z^3 + z^2 - 3 z -7$ and $q(z) = 3 z^3 - z^2 - 5 z + 4$. They both have one real root and a pair of complex conjugate roots. Then, $\Wd(p,q) = 2.02001392\ldots$ is the squared Wasserstein distance of $p,q$. This real number is algebraic: indeed, it is a root of the following univariate monic irreducible polynomial with rational coefficients:
    \begin{equation}\label{eq:minimal_polynomial_intro}
        \begin{gathered}
        z^{18}-\tfrac{25}{18} z^{17} -\tfrac{6611}{432} z^{16} +\tfrac{9658735}{314928} z^{15} +\tfrac{860802973}{5038848} z^{14} -\tfrac{656593649875}{816293376} z^{13} +\tfrac{129841846201105}{1586874322944} z^{12} \\
        +\tfrac{28152774676858415}{3570467226624} z^{11} -\tfrac{46036000403242535}{3570467226624} z^{10} -\tfrac{64389115881803301815}{1156831381426176} z^9 \\
        +\tfrac{987135963331795614481}{4627325525704704} z^8
        -\tfrac{188523216802141986655495}{749626735164162048} z^7 +\tfrac{570696146952180632862839}{26986562465909833728} z^6 \\
        +\tfrac{2978098003861411543837955}{40479843698864750592} z^5
        +\tfrac{674775812403532619558719717}{364318593289782755328} z^4\\
        -\tfrac{120675780215928578619735476735}{39346408075296537575424} z^3 -\tfrac{1185279955326920964318656606075}{354117672677668838178816} z^2 \\
        +\tfrac{1848261661275845551117166614375}{3187059054099019543609344} z +\tfrac{18255232646137865373249774012390625}{1835746015161035257118982144}.
    \end{gathered}
    \end{equation}
    To the polynomials $p,q$, after pairing their roots as in \Cref{fig:ex_pq_intro}, we can associate a graph $\Gamma$, namely the bottom-left graph in \Cref{fig:square_deg3}. The graph can be interpreted as a vertex of a Birkhoff-type polytope, constructed exploiting the root pattern of $p$ and $q$. By looking at the graph and using \Cref{table:WDdegrees}, we get the upper bound of $\frac{(3!)^2}{2} = 18$ for the algebraic degree of $\Wd(p,q)$, which is actually an equality by the irreducibility of \eqref{eq:minimal_polynomial_intro}.
\end{example}

\section{Finitely supported probability measures}\label{sec:setting}

The Wasserstein distance allows to put a distance between the space of probability measures on a metric space. In \cite{ACL23:OptTransportAlgHypers}, it has been used to introduce a new metric on hypersurfaces of degree $d$ in $\CP^n$.
We will recall the definition of Wasserstein distance for finitely supported measures and relate the Wasserstein distance of univariant polynomials with an optimization problem on the Birkhoff polytope.
In this section, we consider univariate polynomials with coefficients in $\mathbb{C}$.
We associate probability measures to finitely supported probability measures induced by polynomials in the following way.

\begin{definition}
    Let $p \in \C[z]$ be a polynomial of degree $d$. Its associated measure $\mu(p)$ is
    \begin{equation*}
        \mu(p) = \frac{1}{d} \sum_{p(x) = 0} m(p,x) \delta_x,
    \end{equation*}
    where $m(p,x)$ is the multiplicity of $p$ at $x\in\C$.
\end{definition}
More concretely, if $p(z)$ factors as $\prod_{i=1}^d (z - \alpha_i)$, then $\mu(p) = \frac{1}{d} \sum_{i=1}^d \delta_{\alpha_i}$. In particular, if $p$ has no multiple roots, then $\mu(p)$ is the uniform counting measure on the roots of $p$. We will be mostly interested in the case where $p$ has no multiple roots.

\begin{definition}
    Let $\mu_1, \mu_2$ be finitely supported probability measures on $\C$. A \emph{transport plan} from $\mu_1$ to $\mu_2$ is a probability measure $\lambda$ on $\C \times \C$ such that
    $\pi_{1*} \lambda = \mu_1$ and $\pi_{2*} \lambda = \mu_2$. Here $\pi_{1*} \lambda$ is defined via \[
        \pi_{1*}(\lambda)[A] = \lambda[A \times \C],
    \]
    similar for $\pi_{2*}$.
    We write $\Pi(\mu_1,\mu_2)$ for the set of transport plans between $\mu_1$ and $\mu_2$.
    The \emph{cost} of a transport plan is given by
    \begin{equation*}
        \cost(\lambda) = \int \|x - y \|^2 {\rm d} \lambda(x,y).
    \end{equation*}
    The (squared) \emph{Wasserstein distance} between $\mu_1, \mu_2$ is the minimal cost of a transport plan:
    \begin{equation*}
        \Wd(\mu_1,\mu_2) = \min_{\lambda \in \Pi(\mu_1, \mu_2)} \cost(\lambda).
    \end{equation*}
\end{definition}

\begin{remark}
    If $\mu_1, \mu_2$ are finitely supported probability measures, then every transport plan from $\mu_1$ to $\mu_2$ will have support contained in $\supp \mu_1 \times \supp \mu_2$. In particular, they will also have finite support. In this setting we do not have to deal with most measure theoretic issues. See for example \cite[Chapter 7]{Villani03:TopicsOptTrans} for the general definition.
\end{remark}

For measures associated to polynomials, it will be convenient to work with doubly stochastic matrices.
For $d\in \N$, we let $B_d$ denote the Birkhoff polytope of doubly stochastic matrices of size $d \times d$. Recall that a matrix $M \in \R_{\geq 0}^{d\times d}$ is called \emph{doubly stochastic} if all rows and all columns add up to $1$.

\begin{lemma}
\label{lem:birkhoff_transport}
Let $p, q \in \C[z]$ be polynomials of degree $d$ with roots $\alpha_1, \dots, \alpha_d$ and $\beta_1, \dots, \beta_d$ respectively. Suppose that $p$ and $q$ have no multiple roots. Then the following is a bijection:
\begin{align*}
    \lambda : B_d &\rightarrow \Pi(\mu(p),\mu(q)) \\
    M &\mapsto \lambda(M) \coloneqq \frac{1}{d} \sum_{1\leq i,j \leq d} M_{ij} \delta_{\alpha_i, \beta_j}.
\end{align*}
\end{lemma}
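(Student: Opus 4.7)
The plan is to check that $\lambda(M)$ really is a transport plan from $\mu(p)$ to $\mu(q)$, and then to construct an explicit two-sided inverse, using the simplicity of the roots of $p$ and $q$ to guarantee that the coefficients in a point-mass expansion are uniquely determined.

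\textbf{Well-definedness.} Given $M \in B_d$, the measure $\lambda(M) = \frac{1}{d}\sum_{i,j} M_{ij} \delta_{(\alpha_i,\beta_j)}$ has non-negative weights and total mass $\frac{1}{d}\sum_{i,j} M_{ij} = \frac{1}{d}\sum_i 1 = 1$ by the row-sum condition, so it is a probability measure on $\C \times \C$. For the marginals I would evaluate on singletons: since the roots $\alpha_1,\dots,\alpha_d$ are pairwise distinct,
\[
\pi_{1*}\lambda(M)(\{\alpha_i\}) = \lambda(M)(\{\alpha_i\} \times \C) = \tfrac{1}{d}\sum_j M_{ij} = \tfrac{1}{d} = \mu(p)(\{\alpha_i\}),
\]
and $\pi_{1*}\lambda(M)$ is zero off $\{\alpha_1,\dots,\alpha_d\}$, so $\pi_{1*}\lambda(M) = \mu(p)$; the column-sum condition gives $\pi_{2*}\lambda(M) = \mu(q)$ by the symmetric argument.

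\textbf{Inverse.} Let $\lambda' \in \Pi(\mu(p),\mu(q))$. By the remark preceding the lemma, $\supp \lambda' \subseteq \{\alpha_1,\dots,\alpha_d\} \times \{\beta_1,\dots,\beta_d\}$. Because $p$ and $q$ have no multiple roots, all $d^2$ points $(\alpha_i,\beta_j)$ are pairwise distinct in $\C \times \C$, so there exist unique coefficients $N_{ij} \geq 0$ with $\lambda' = \frac{1}{d}\sum_{i,j} N_{ij}\delta_{(\alpha_i,\beta_j)}$. Testing the marginal conditions $\pi_{1*}\lambda' = \mu(p)$ and $\pi_{2*}\lambda' = \mu(q)$ on singletons translates precisely into $\sum_j N_{ij}=1$ and $\sum_i N_{ij}=1$, so $N \in B_d$ and $\lambda(N) = \lambda'$. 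The same uniqueness of coefficients shows that $M \mapsto \lambda(M)$ is injective, completing the bijection.

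\textbf{Main obstacle.} The only step requiring genuine care is the pairwise distinctness of the $d^2$ points $(\alpha_i,\beta_j)$, which is exactly where the no-multiple-roots hypothesis enters: without it, several index pairs could contribute to the same atom and one would have to group them, losing the clean identification with $B_d$. Apart from this, the proof is a direct matching between the probability-measure axioms and the doubly-stochastic conditions, and I do not anticipate any further difficulty.
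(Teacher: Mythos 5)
Your proof is correct and follows essentially the same approach as the paper's: both rely on the observation that, because the $d^2$ points $(\alpha_i,\beta_j)$ are pairwise distinct, nonnegative matrices are in bijection with finite measures supported on that grid, and then match the marginal conditions to the row/column-sum conditions. The paper states the bijection between $\R^{d\times d}_{\geq 0}$ and finite measures up front and checks the marginal equivalence once, whereas you separate well-definedness from the inverse construction, but the content is the same.
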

\begin{proof}
    Since we assume $p$ and $q$ have no multiple roots, it follows that the map
    \begin{equation*}
        M \mapsto \frac{1}{d} \sum_{1\leq i,j \leq d} M_{ij} \delta_{\alpha_i, \beta_j}
    \end{equation*}
    defines a bijection between $\R^{d\times d}_{\geq 0}$ and the set of \emph{finite} measures supported on the points
    $\{\alpha_1, \dots, \alpha_d\} \times \{\beta_1, \dots, \beta_d\}$.
    For $M \in \R^{d\times d}_{\geq 0}$ and $1\leq i \leq d$, we have
    \begin{equation*}
        \pi_{1*}(\lambda(M))[\{\alpha_i\}] =
        (\lambda(M))[\{\alpha_i\} \times \C] =
        \sum_{1\leq j \leq d} (\lambda(M))[\{(\alpha_i, \beta_j)\}]
        = \frac{1}{d} \sum_{1\leq j \leq d} M_{ij}. 
    \end{equation*}
    From this we see that $\pi_{1*}(\lambda(M)) = \mu(p)$ if and only if all row sums of $M$ are equal to $1$. Similarly $\pi_{2*}(\lambda(M)) = \mu(q)$ if and only if all column sums are equal to $1$. So $\lambda(M)$ is a transport plan if and only if $M \in B_d$.
\end{proof}
\begin{remark}
If $p$ and $q$ are allowed to have multiple roots, then $\Pi(\mu(p), \mu(q))$ is given by a transportation polytope (see \cite{DeLoeraKim14} for an overview). However,
the map defined in \Cref{lem:birkhoff_transport} is still a surjection if we list the roots with multiplicity.
\end{remark}

We use \Cref{lem:birkhoff_transport} in our consideration of the Wasserstein distance for polynomials.
From now on, we consider univariate polynomials $p,q \in \C[z]$ with factorizations
$p(z) = \prod_{i=1}^d (z - \alpha_i)$, $q(z) = \prod_{i=1}^d (z - \beta_i)$, without multiple roots. For $M \in B_d$, we write
\begin{equation}\label{eqn:cost_M}
    \cost(M, p, q) = \cost(\lambda(M)) = \frac{1}{d} \sum_{1\leq i,j \leq d} M_{ij} \| \alpha_i -  \beta_j \|^2.
\end{equation}
By these considerations and using the bijection in \Cref{lem:birkhoff_transport} we interpret the Wasserstein distance between two polynomials as an optimization problem:
\begin{equation*}
    \Wd(p,q) \coloneqq \Wd(\mu(p), \mu(q)) = \min_{M \in B_d} \cost(M,p,q).
\end{equation*}

\section{Invariant Birkhoff polytope}\label{sec:Birkhoff}

We now restrict to polynomials with real coefficients and study the effects of complex conjugation, denoted as usual by $\bar{z}$. If $p,q \in \R[z]$, then complex conjugation will induce an involution on the transport plans form $\mu(p)$ to $\mu(q)$. We compute this involution for the Birkhoff polytope under the identification of \Cref{lem:birkhoff_transport}.

Assume $p, q \in \R[z]$ with roots $\alpha_i$ and $\beta_i$ as above. We denote by $\phi$ and $\psi$ the permutations that swap complex conjugate pairs of roots of $p$ and $q$ respectively, namely,
\begin{equation}\label{eq:phipsi_complconj}
    \begin{aligned}
    \alpha_{\phi(i)} &= \overline{\alpha_{i}}, \\
    \beta_{\psi(i)} &= \overline{\beta{i}}.
    \end{aligned}
\end{equation}
We will call $P_\phi, P_\psi$ the permutation matrices associated to $\phi$ and $\psi$ respectively. Then the pair $(\phi,\psi)$ defines a map $\iota : B_d \rightarrow B_d$ via
\[
(\iota M)_{ij} = M_{\phi(i)\psi(j)} = (P_\phi M P_\psi)_{ij}. 
\]
We can see that $\iota$ is a linear isometry of the Birkhoff polytope. In fact, every combinatorial symmetry of $B_d$ is of the form $M \mapsto P_g M P_h$ or $M \mapsto P_g M^\top P_h$ for some permutations $g,h \in \Sym(d)$, see \cite{BaumeisterLadisch18}.
In particular, $\iota$ defines an involution on the Birkhoff polytope. This suggests the following definition.
\begin{definition}\label{def:invBirkhoff}
    Let $p, q \in \R[z]$ be polynomials of degree $d$ with distinct roots and take $\phi,\psi \in \Sym(d)$ to be permutations that satisfy \eqref{eq:phipsi_complconj}. Consider the operation $\iota = (\phi,\psi)$. The associated \textit{$\iota$-invariant Birkhoff polytope}, denoted $\invp$, is the set of all doubly stochastic matrices that are invariant under $\iota$.
\end{definition}

Observe that the cost function does not change under $\iota$, namely
\[
\cost(M,p,q) = \cost(\iota M, p,q) = \cost\left(\frac{M + \iota M}{2}, p,q \right).
\]
Therefore, instead of solving an optimization problem over the Birkhoff polytope, we can find the (squared) Wasserstein distance using the $\iota$-invariant Birkhoff polytope:
\[
\Wd(p,q) = \frac{1}{d} \min_{M \in \invp} \cost(M,p,q).
\]
The cost function remains linear for $M \in \invp$, hence the minimum is attained at a vertex of $\invp$. The $\iota$-invariant Birkhoff polytope can reduce the complexity of the optimization problem, since it is potentially lower dimensional.
We give the formula for the dimension of $\invp$ in terms of the number of real and complex roots of the polynomials $p,q$. 

Let us introduce some notation. We denote the number of real roots of a polynomial $p \in \R[z]$ by $p_{\R}$, and the number of \emph{pairs} of its complex conjugate roots by $p_{\C}$.

\begin{lemma}\label{lem:dimBiota}
Suppose $\iota=(\phi,\psi)$ is the involution associated with $p,q$. Then, the dimension of the $\iota$-invariant Birkhoff polytope $B_d^{\iota}$ is
$$\dim B_d^{\iota}=(\pr+\pc)(\qr+\qc)-(\pr+\pc+\qr+\qc-1)+\pc\qc.$$
\end{lemma}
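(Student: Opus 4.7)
I plan to use two standard facts about a linear involution $\sigma$ on a finite-dimensional $\R$-vector space $W$: (i) the fixed subspace has dimension $\tfrac{1}{2}(\dim W + \mathrm{tr}\,\sigma)$, and (ii) traces are additive along $\sigma$-equivariant short exact sequences. Together with the observation that the uniform matrix provides a relative interior point, this reduces the lemma to two trace computations.

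First, the uniform matrix $\tfrac{1}{d}J$ is $\iota$-invariant, strictly positive, and doubly stochastic, so it lies in the relative interior of $\invp$. Hence
$$\dim\invp \;=\; \dim\bigl(\mathrm{aff}_0(B_d)^\iota\bigr) \;=\; \tfrac{1}{2}\bigl(\dim\mathrm{aff}_0(B_d) + \mathrm{tr}(\iota|_{\mathrm{aff}_0(B_d)})\bigr),$$
where $\mathrm{aff}_0(B_d) = \{M \in \R^{d\times d} : \text{all row and column sums vanish}\}$ has dimension $(d-1)^2$. The task reduces to computing the trace on $\mathrm{aff}_0(B_d)$.

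To this end, I use the $\iota$-equivariant short exact sequence
$$0 \longrightarrow \mathrm{aff}_0(B_d) \longrightarrow \R^{d\times d} \xrightarrow{(R,C)} H \longrightarrow 0,$$
where $(R,C)$ sends $M$ to its pair of marginal vectors and $H = \{(r,c) \in \R^d \oplus \R^d : \sum r_i = \sum c_j\}$; the action of $\iota$ on the right is $(\phi^*,\psi^*)$ permuting coordinates (this follows from the direct computation $R(\iota M)_i = R(M)_{\phi(i)}$ and similarly for $C$). On $\R^{d\times d}$, the involution $\iota$ is a permutation of matrix entries whose fixed pairs are exactly $(i,j)$ with $\phi(i)=i$ and $\psi(j)=j$, so $\mathrm{tr}(\iota|_{\R^{d\times d}}) = \pr\qr$. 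On $\R^d \oplus \R^d$ the trace of $(\phi^*,\psi^*)$ is $\pr+\qr$; the $\iota$-fixed line $\R(\mathbf{1},0)$ is a complement to $H$ in $\R^d \oplus \R^d$ (since its image under $(r,c)\mapsto\sum r_i - \sum c_j$ is nonzero), so $\mathrm{tr}(\iota|_H) = \pr+\qr-1$. Trace additivity then gives
$$\mathrm{tr}(\iota|_{\mathrm{aff}_0(B_d)}) \;=\; \pr\qr - (\pr+\qr-1) \;=\; (\pr-1)(\qr-1).$$

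Combining the two yields
$$\dim\invp \;=\; \tfrac{1}{2}\bigl((d-1)^2 + (\pr-1)(\qr-1)\bigr).$$
Expanding the right-hand side using $d = \pr+2\pc = \qr+2\qc$ (so that $d^2 = \pr\qr + 2\pr\qc + 2\pc\qr + 4\pc\qc$ and $2d = \pr+\qr+2\pc+2\qc$) converts this to $(\pr+\pc)(\qr+\qc) - (\pr+\pc+\qr+\qc-1) + \pc\qc$, as claimed. The main conceptual step is setting up the short exact sequence $\iota$-equivariantly and spotting the $\iota$-invariant complement $\R(\mathbf{1},0)$ of $H$; once that is in place, the rest is routine trace computation and symbolic algebra.
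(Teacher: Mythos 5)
Your proof is correct but takes a genuinely different route from the paper's. The paper parametrizes the affine hull of $\invp$ directly: it builds a reduced matrix $N$ by deleting the rows and columns that are forced by the $\iota$-symmetry and merging the surviving off-diagonal pairs, and then it counts by hand which of the row/column-sum relations on $N$ remain independent. This is elementary but requires a careful bookkeeping argument (the "last row of $N$" device, etc.) to establish independence of the $\pr+\pc+\qr+\qc-1$ relations. Your approach instead invokes the character-theoretic fact $\dim W^\sigma=\tfrac12(\dim W+\operatorname{tr}\sigma)$ for a linear involution $\sigma$, reduces everything to computing $\operatorname{tr}(\iota)$ on the zero-marginal subspace $\mathrm{aff}_0(B_d)$, and evaluates that trace via the $\iota$-equivariant short exact sequence $0\to\mathrm{aff}_0(B_d)\to\R^{d\times d}\to H\to 0$, together with the observation that $\R(\mathbf 1,0)$ is an $\iota$-invariant complement of $H$. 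The traces $\pr\qr$, $\pr+\qr$, and $\pr+\qr-1$ are immediate, and the final identity $\tfrac12\bigl((d-1)^2+(\pr-1)(\qr-1)\bigr)=(\pr+\pc)(\qr+\qc)-(\pr+\pc+\qr+\qc-1)+\pc\qc$ checks out after substituting $d=\pr+2\pc=\qr+2\qc$. Both proofs hinge on the same initial observation (made implicit in the paper by the phrase "the linear span of $B_d^\iota$ coincides with $V$", and made explicit by you via the relative-interior point $\tfrac1d J$) that $\dim\invp$ equals the dimension of the $\iota$-fixed part of the ambient affine space. After that, the paper's method is more elementary and self-contained, while yours is shorter, less error-prone, and more readily generalizable — for instance, it would handle any finite symmetry group acting on the Birkhoff polytope by averaging the fixed-space dimension via character theory.
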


\begin{proof}
By definition, the linear span of $B_d^{\iota}$ coincides with the linear space
\[
V:=\left\lbrace M\in \mathbb{R}^{d\times d}: M_{ij}=M_{\phi(i)\psi(j)}, \sum_{i=1}^d M_{ij}=1, \forall j\in [d],\, \sum_{j=1}^d M_{ij}=1, \forall i\in [d] \right\rbrace.
\]
Using coordinates $(M_{ij})_{i,j=1}^d \in \R^{d\times d}$, on the affine subspace $V$ the following relations hold: $\sum_{j=1}^dM_{ij}\!=\!\sum_{j=1}^d M_{\phi(i)j}$ for $i\!=\!1,\ldots,d$, and $\sum_{i=1}^d M_{ij}\!=\!\sum_{i=1}^d M_{i\psi(j)}$ for $j\!=\!1,\ldots,d$.
Among the relations that row-sums and column-sums of $M$ are $1$, there are exactly $\pr+\pc+\qr+\qc$ distinct linear relations but only $\pr+\pc+\qr+\qc-1$ of them are independent.

Consider a new matrix $N$ constructed from $M$ as follows: for every pair $(i,\phi(i))$ with $i < \phi(i)$, delete the $\phi(i)$-th row of $M$; for every pair $(j,\psi(j))$ with $j < \psi(j)$, delete the $\psi(j)$-th column of $M$; for the entry $M_{ij}$ that remains, if $i\neq\phi(i)$ and $j\neq\psi(j)$, we replace $M_{ij}$ by $M_{ij}+M_{i\psi(j)}.$
For the rows and the columns of $N$ we use the indices inherited from $M$. 
Each of the $(\pr+\pc)(\qr+\qc)$ entries of $N$ is a sum of 1 or 2 of the  $(\pr+\pc)(\qr+\qc)+\pc\qc$ entries of $M$. No entry of $M$ appears in more than one entry of $N$. We denote the linear space spanned by the $(\pr+\pc)(\qr+\qc)+\pc\qc$ entries of $M$ by $W$.

Suppose that $i$ is a valid row index for $N$, then the $i$-th row-sum of $M$ is either $\sum_j N_{ij}$ if $i\neq\phi(i)$, or $\sum_j a_j N_{ij}$, where $a_j=2$ if $j\neq\psi(j)$ and $a_j=1$ otherwise. Similar formulae hold for column-sums of $M$. 
Suppose $m=\pr+\pc+\qr+\qc-1$ and $\ell_1,\ldots,\ell_m$ are the linear relations that the weighted row-sums and column-sums of $N$ equal to $1$, except for the weighted sum of the last row of $N$. We assume that the first $\qr+\qc$ linear relations correspond to weighted column-sums of $N$. These linear relations correspond to a basis of linear relations that the row and column sums of $M$ are equal to 1.
Suppose $\sum_{i=1}^m \lambda_i \ell_i=0$ for some $\lambda_i\in \mathbb{R}$. For $i=1,\ldots,\qr+\qc$, each $\ell_i$ contains the $i$-th entry of the last row of $N$ that does not appear in any other $\ell_j$ with $j\neq i$. This implies that the corresponding scalars $\lambda_i$ are zero. We are then left with $\sum_{i=\qr+\qc}^m \lambda_i \ell_i=0$, which gives the trivial result $\lambda_i = 0$ for all $i$, because the conditions on the weighted row-sums of the first $\pr+\pc-1$ rows of $N$ are linearly independent.
Hence, $\ell_1,\ldots,\ell_{m}$ are linearly independent over the linear span of the entries of $N$, and over $W$ as well. We conclude 
\[
\dim B_d^{\iota}=\dim V=(\pr+\pc)(\qr+\qc)+\pc\qc-(\pr+\pc+\qr+\qc-1). \qedhere
\]
\end{proof}

In the following subsections, we will focus on the vertices of this polytope. In order to get a better understanding, we will introduce a graph associated to the vertices of $\invp$. To conclude this part, we construct some examples of $\iota$-invariant Birkhoff polytopes, which we will review later using the graph interpretation.

\begin{example}
    Let $d=2$, then $p,q$ are quadratic polynomials with real coefficients. The Birkhoff polytope $B_2$ is a segment in $\R^4$. There are three possible cases for the structure of the roots of $p,q$: both polynomials have both roots real; one has both roots real and one has both roots complex; both polynomials have both roots complex. 
    In the first case, we have $P_\phi = P_\psi = \1$, hence $\invptwo = B_2$. 
    In the second case, we assume that $p$ has complex roots and $q$ real roots, then $P_\phi = \left( \begin{smallmatrix} 0 & 1 \\ 1 & 0 \end{smallmatrix} \right)$, $P_\psi = \1$, and the polytope becomes a point: $\invptwo = \left\lbrace \frac{1}{2} \left( \begin{smallmatrix} 1 & 1 \\ 1 & 1 \end{smallmatrix} \right)\right\rbrace$.
    Finally, in the case where the roots of both polynomials are complex, $P_\phi = P_\psi = \left( \begin{smallmatrix} 0 & 1 \\ 1 & 0 \end{smallmatrix} \right)$ hence $\iota$ fixes the two vertices of the Birkhoff polytope, so $\invptwo = B_2$.
\end{example}

\begin{example}\label{ex:invBthree}
    Let $d=3$, then $p,q$ are cubic polynomials with real coefficients. The polytope $B_3$ has dimension $4$ in $\R^9$. There are again three cases of roots configurations for $p,q$:
    \begin{enumerate}
        \item $p,q$ have all roots real, then $P_\phi = P_\psi = \1$, so $\invpthree = B_3$.
        \item $q$ has all roots real and $p$ has exactly one real root. We will assume that the real root of $p$ is $\alpha_1$. Then $P_\phi = \left( \begin{smallmatrix} 1 & 0 & 0 \\ 0 & 0 & 1 \\ 0 & 1 & 0 \end{smallmatrix} \right)$, $P_\psi = \1$. The $6$ vertices of the Birkhoff polytope collapse to form a triangle 
        \[
        \invpthree = \operatorname{conv} \left\lbrace \left( \begin{smallmatrix} 1 & 0 & 0\\ 0 & \frac{1}{2} & \frac{1}{2}\\ 0 & \frac{1}{2} & \frac{1}{2} \end{smallmatrix} \right), \left( \begin{smallmatrix} 0 & \frac{1}{2} & \frac{1}{2}\\ 1 & 0 & 0\\ 0 & \frac{1}{2} & \frac{1}{2} \end{smallmatrix} \right), \left( \begin{smallmatrix} 0 & \frac{1}{2} & \frac{1}{2}\\ 0 & \frac{1}{2} & \frac{1}{2}\\ 1 & 0 & 0 \end{smallmatrix} \right) \right\rbrace.
        \]
        \item $p, q$ both have exactly one real root. We will assume that these are $\alpha_1, \beta_1$. Then $P_\phi = P_\psi = \left( \begin{smallmatrix} 1 & 0 & 0 \\ 0 & 0 & 1 \\ 0 & 1 & 0 \end{smallmatrix} \right)$. The associated invariant Birkhoff polytope is now the square
        \[
        \invpthree = \operatorname{conv} \left\lbrace \mathds{1}, \left( \begin{smallmatrix} 1 & 0 & 0 \\ 0 & 0 & 1\\ 0 & 1 & 0 \end{smallmatrix} \right),
        \left( \begin{smallmatrix} 0 & \frac{1}{2} & \frac{1}{2}\\ \frac{1}{2} & \frac{1}{2} & 0 \\ \frac{1}{2} & 0 & \frac{1}{2} \end{smallmatrix} \right), 
        \left( \begin{smallmatrix} 0 & \frac{1}{2} & \frac{1}{2} \\ \frac{1}{2} & 0 & \frac{1}{2}\\ \frac{1}{2} & \frac{1}{2} & 0 \end{smallmatrix} \right)\right\rbrace.
        \]
    \end{enumerate}
\end{example}

\begin{example}
    Let $d=4$, then $p,q$ are quartic polynomials with real coefficients. The Birkhoff polytope $B_4$ has dimension $9$ in $\R^{16}$. Assume that $p$ has only real roots, and $q$ has only complex roots. Then, as predicted in \Cref{lem:dimBiota}, the invariant Birkhoff polytope is a $3$-dimensional octahedron. This and all the other cases are listed in \Cref{table:polytope dimension}.
    \begin{table}[ht]
        \centering
        {\def\arraystretch{1.5}
        \begin{tabular}{>{\centering\arraybackslash}m{2cm} | >{\centering\arraybackslash}m{2cm} |>{\centering\arraybackslash}m{2cm}|>{\centering\arraybackslash}m{3cm}  }
        $(p_\R,p_\C)$ & $(q_\R,q_\C)$ & $\dim B_d^{\iota}$ & $|\operatorname{vertices}(B_d^{\iota})|$\\
        \hline
        \hline
        (4,0) & (4,0) & 9 &24\\
        (4,0) & (2,1) & 6 &12\\
        (4,0) & (0,2) & 3 &6\\
        (2,1) & (2,1) & 5 &13\\
        (2,1) & (0,2) & 4 &12\\
        (0,2) & (0,2) & 5 &8\\
        \end{tabular}
        }
        \caption{Dimensions of the invariant Birkhoff polytopes for $d=4$.}
        \label{table:polytope dimension} 
    \end{table}
\end{example}

\subsection{Graph interpretation}\label{subsec:graph}
The aim of this subsection is to introduce a graph interpretation of the vertices of $\invp$. This interpretation will be used to realize all the vertices of $\invp$ as minimizer of our optimization problem \eqref{eq:opt_problem} and we shall see that the graph automorphisms are closely related to the Wasserstein distance degree.

Recall that the $\iota$-invariant Birkhoff polytope $\invp$ is the image of the standard Birkhoff polytope via the map $v \mapsto \frac{v+\iota(v)}{2}$. Any vertex of the $\iota$-invariant Birkhoff polytope $\invp$ is necessarily the image of a vertex of the Birkhoff polytope. Otherwise, it would be possible to express it as the convex combination of other points in $\invp$. 
It is a classical result due to Birkhoff \cite{Birkhoff1946} and von Neumann \cite{Neumann1953} that the vertices of the Birkhoff polytope are permutation matrices.
As a consequence, suppose $M\in \R^{d\times d}$ be a vertex of $\invp$, then $M_{ij}\in \left \{0,\frac{1}{2},1\right\}$. Hence, the matrix $2M$ has integer entries.

We associate a bipartite graph $\Gamma$ with $d$ nodes on the left side (L) and $d$ nodes on the right side (R), such that $2M$ is its adjacency matrix. Namely, the vertices of the graph, denoted $V(\Gamma) = [d] \times \{L,R\}$, satisfy the property that $(i,L)$ and $(j,R)$ are connected by $2M_{ij}$ edges. Such an edge will be denoted by $(i,j)$, where the first entry indicates a vertex $(i,L)$ and the second entry indicates a vertex $(j,R)$. Since $M$ is a doubly stochastic matrix, every vertex in $\Gamma$ has degree $2$. $\Gamma$ is then a disjoint union of cycles, and because it is bipartite, each cycle has even length.

The invariance of $M$ under $\iota$ gives a graph automorphism that we will by abuse of notation also call $\iota$. Explicitly, $\iota(i,L) = (\phi(i),L)$ and $\iota(j,R) = (\psi(j),R)$.
Note that $\iota$ is also an involution and it acts independently on both sides of the bipartite graph $\Gamma$. 

Let us introduce some types of cycles in $\Gamma$, governed by the involution:
\begin{enumerate}
    \item $2$-cycles between fixed points of $\iota$;
    \item pairs of $2$-cycles that are interchanged by $\iota$;
    \item cycles of length $2(2k +1),\, k > 0$, including exactly one fixed point of $\iota$ on each side;
    \item cycles of length $4k,\, k > 0$, including exactly two fixed points of $\iota$ on one side and none on the other.
\end{enumerate}
The following result states that these are the only possible disjoint cycles in our graphs.

\begin{theorem}\label{thm: vertices of gamma}
Let $M$ be a vertex of $\invp$ and $\Gamma$ be the associated graph. Then, the disjoint cycles in $\Gamma$ are of type $1$--$4$.
Conversely, let $\Gamma$ be a bipartite graph with $d$ vertices on each side, such that all its vertices have degree $2$. If all the disjoint cycles of $\Gamma$ are of type $1$--$4$, then the graph corresponds to a vertex $M\in \invp$.
\end{theorem}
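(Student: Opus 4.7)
The plan is to combine the Birkhoff--von Neumann theorem with a case analysis of how the involution $\iota$ acts on each connected component of $\Gamma$, together with a perturbation argument that rules out every non-vertex configuration. Since a vertex $M$ of $\invp$ arises from a vertex of $B_d$ after averaging with $\iota M$, its entries lie in $\{0,\tfrac12,1\}$, so $\Gamma$ is $2$-regular bipartite and decomposes into a disjoint union of even cycles. On each cycle of length $2k\geq 4$ the space of row-and-column-sum-preserving perturbations is one-dimensional, spanned by the ``alternating $\pm\epsilon$'' assignment on its edges, while $2$-cycles admit no perturbation at all. The vertex property of $M$ amounts to saying that no nonzero $\iota$-invariant combination of these alternating perturbations survives.

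For the ``only if'' direction, consider a cycle $C$ of $\Gamma$. Because $\iota$ is a graph automorphism preserving the bipartition, either $\iota(C)=C$ or $\iota(C)$ is a cycle disjoint from $C$, and in the latter case $C$ contains no $\iota$-fixed vertex. If $\iota(C)\neq C$, the alternating perturbation on $C$ lifts via $\iota$ to a nonzero $\iota$-invariant perturbation on $C\sqcup \iota(C)$ unless both cycles have length~$2$; this forces type~2. If $\iota(C)=C$, then $\iota|_C$ is a bipartition-preserving involution of a $2k$-cycle, i.e.\ an element of the dihedral group $D_{2k}$. The case $k=1$ gives type~1, since both endpoints must be $\iota$-fixed. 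For $k\geq 2$, I would enumerate the bipartition-preserving involutions and show: the identity and the $180^\circ$ rotation (bipartition-preserving only when $k$ is even) both preserve the two alternating edge classes, hence admit an $\iota$-invariant alternating perturbation and are excluded; the remaining $k$ bipartition-preserving reflections each swap the two classes, so no invariant perturbation exists. A direct fixed-point count then shows that for $k$ odd each such reflection fixes one vertex on each side (type~3, cycle length $2(2\ell+1)$), while for $k$ even both fixed vertices lie on the same side (type~4, cycle length $4\ell$).

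For the converse, given a $2$-regular bipartite graph $\Gamma$ whose cycles are all of types~1--4, set $M=\tfrac12 A(\Gamma)$: double-stochasticity is immediate from $2$-regularity, $\iota$-invariance holds cycle by cycle (types 1, 3, 4 are $\iota$-fixed setwise by construction, and pairs of type~2 are swapped), and the vertex property is reestablished by running the perturbation analysis in reverse (types 1 and 2 offer no alternating freedom, and on types 3--4 the reflection action negates the one-dimensional perturbation space, forcing its $\iota$-invariant part to vanish). The main obstacle is the dihedral bookkeeping in the middle paragraph---verifying that rotation by $k$ (when it preserves the bipartition) fixes each alternating class while every bipartition-preserving reflection swaps them, and locating the fixed vertices of each such reflection on the correct side---since this is exactly what separates the non-vertex configurations from the vertex configurations listed as types~3 and~4.
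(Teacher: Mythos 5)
Your proof is correct and arrives at the same conclusion, but it reorganizes the cycle analysis in a genuinely different way. Both arguments start from Birkhoff--von Neumann (vertex entries in $\{0,\tfrac12,1\}$, so $\Gamma$ is a disjoint union of even cycles) and both characterize non-vertices by the existence of a nonzero $\iota$-invariant alternating perturbation, which is the same as the paper's decomposition into two $\iota$-invariant perfect matchings. The difference is in how a setwise $\iota$-invariant cycle $C$ is handled: the paper counts fixed points of $\iota|_C$ (it must have $0$, $2$, or all, and the cases $0$ and ``all'' are ruled out by two separate ad hoc observations, one of which is that entries between $\iota$-fixed points of a vertex must be $0$ or $1$); you instead enumerate the bipartition-preserving involutions of the dihedral group of the $2k$-cycle and sort them by whether they preserve or swap the two alternating edge classes. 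Your dihedral route treats the ``all fixed'' case (identity) and the ``no fixed point'' case (rotation by $k$, bipartition-preserving only when $k$ is even) by the single edge-class criterion that also isolates the vertex reflections, so it avoids the paper's separate $0/1$-entry observation and gives a more uniform account; the fixed-point parity of the surviving reflections then yields types $3$ and $4$ exactly. The steps you defer --- that edge-midpoint reflections of the $2k$-gon flip the bipartition while vertex reflections preserve it, that the remaining bipartition-preserving rotation of order $\le 2$ is rotation by $k$ when $k$ is even, and that the two fixed vertices of a vertex reflection lie on opposite sides iff $k$ is odd --- are all routine and correct as you describe them. One point worth making explicit in the converse, as the paper also glosses, is that ``type $3$/$4$'' must be read as saying $\iota$ restricts to a reflective automorphism of the cycle (not merely that the cycle contains the stated number of $\iota$-fixed vertices); this is what guarantees $M$ is $\iota$-invariant so that $M \in \invp$ at all, after which your reversed edge-class argument kills any $\iota$-invariant perturbation.
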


\begin{proof}
Suppose that $M$ is a vertex of $\invp$ corresponding to the graph $\Gamma$.
Since $M$ is the image of a permutation matrix via $\frac{\cdot \,+\, \iota(\cdot)}{2}$, the entry of $M$ corresponding to a pair of fixed points of $\iota$ is either $0$ or $1$. Hence, if there is an edge in $\Gamma$ between two fixed points of $\iota$, then there must be two such edges. The latter case describes cycles $C$ in $\Gamma$ of type $1$.

If a cycle $C$ contains no fixed point of $\iota$, then $C$ must have length $2$. 
Otherwise, we could decompose $C$ (and $\iota(C)$ if $C$ is not set-wise fixed) into two distinct $\iota$-invariant perfect matchings. This translates into an expression for $M$ as the midpoint of two other points in $\invp$, contradicting the vertex assumption. 
Such a cycle of length $2$ must therefore be (one cycle of the pair) of type $2$.

Next, suppose that $C$ is a cycle of length at least $4$. Then, it must contain a fixed point of $\iota$. 
Since $\iota$ is an involution on $C$, it either contains $2$ fixed points, or all the points in $C$ are fixed. We already observed that between two fixed points there cannot be simple edges, so the second option leads to a contradiction. Hence, $C$ must have exactly $2$ fixed points and they can either lie on the same side (cycle of type 4) or one on each side (cycle of type 3).

Conversely, let $M$ be the matrix corresponding to the bipartite graph $\Gamma$ with cycles of types $1$--$4$. Then, $M$ is a doubly stochastic matrix and it is invariant under $\iota$ so $M\in \invp$. We prove that $M$ is a vertex. 
Assume that $M=\frac{N+L}{2}$ for some $N,L\in \invp$. Note that $N_{ij}=L_{ij}=0$ whenever $M_{ij}=0$, and $N_{ij}=L_{ij}=1$ whenever $M_{ij}=1$. This is the case for $(i,j)$ with the edge between $(i,L), (j,R)$ in a cycle of type $1$ or $2$. Let $C$ in $\Gamma$ be a cycle of type $3$ or $4$. 
Let $(i_1,j_1),(i_2,j_2),\ldots,(i_\ell,j_\ell)$ be the indices of entries of $M$ corresponding to edges in $C$. 
Without loss of generality, assume $i_1=i_2=\phi(i_1)$ and $j_1=\psi(j_2)$. The only two positions in the $i_1$ row filled with nonzero entries in $N$ or $L$ are $(i_1,j_1)$ and $(i_1,j_2)$.
Let $L_{i_1,j_1}=\epsilon$, then $L_{i_1,j_2}=1-\epsilon$. Since $L$ is invariant under $\iota$, we must have $\epsilon=1-\epsilon=\frac{1}{2}$. Similarly, $N_{i_1,j_1}=N_{i_1,j_2}=\frac{1}{2}$. 
Analogously, we have $N_{i_k,j_k}=L_{i_k,j_k}=M_{i_k,j_k}=\frac{1}{2}$ for all $k=1,\ldots,\ell$.
Hence, $L=N=M$ and $M$ is a vertex. 
\end{proof}

We are interested in the cardinality of the group automorphisms of the graph $\Gamma$ that commute with $\iota$ and preserve both sides. Such number relates to the algebraic degree of the Wasserstein distance, as it will be explained in \Cref{sec:alg_deg}.
The formula for the cardinality relies on the number of cycles of each type. We denote the number of cycles of type $1$ and of pairs of cycles of type $2$ by $c_{1}$ and $c_{2}$ respectively; the number of cycles in the third class with length $2(2k+1)$ is $c_{3,k}$ and the number of cycles in the fourth class with length $4k$ is $c_{4,k,L}$ or $c_{4,k,R}$, depending on the side of the fixed points.

\begin{corollary}
Let $\Gamma$ be the bipartite graph corresponding to a vertex $M$ in $\invp$ and let $S$ be the set of disjoint cycles (or type 2 cycle pairs) in $\Gamma$.
The group of automorphisms $\Aut_\iota(\Gamma)$ of $\Gamma$ that commute with $\iota$ and preserve both sides has cardinality
    \begin{align*}
        |\Aut_\iota(\Gamma)| &= \prod_{C \in S} |\Aut_\iota(C)|c_{1}!\, c_{2}! \prod_k c_{3,k}!\, c_{4,k,L}!\, c_{4,k,R}! \\
        &=2^{c_{2}+\sum_k (c_{3,k}+2c_{4,k,L}+2c_{4,k,R})}  c_{1}!\, c_{2}! \prod_k ( c_{3,k}!\, c_{4,k,L}!\, c_{4,k,R}! ).
    \end{align*}
\end{corollary}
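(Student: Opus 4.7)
The plan is to decompose $\Aut_\iota(\Gamma)$ through its action on $S$. First I would check that any $\sigma\in\Aut_\iota(\Gamma)$ permutes the elements of $S$, sending each $C\in S$ to an element of the same type (same length, and for type~4 cycles, the same side of the $\iota$-fixed vertices); this gives a homomorphism from $\Aut_\iota(\Gamma)$ to the product $\Sym(c_1)\times\Sym(c_2)\times\prod_k(\Sym(c_{3,k})\times\Sym(c_{4,k,L})\times\Sym(c_{4,k,R}))$. Surjectivity would follow by choosing, for each pair of cycles of the same type, an $\iota$-equivariant side-preserving isomorphism between them, and using these to realize any cycle-permutation as an actual graph automorphism. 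The kernel consists exactly of automorphisms that stabilize each $C\in S$ set-wise, i.e., $\prod_{C\in S}\Aut_\iota(C)$. A section of the quotient map exists by the same family of chosen isomorphisms, so $|\Aut_\iota(\Gamma)|$ factors as claimed in the first displayed equality.

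Next I would evaluate $|\Aut_\iota(C)|$ in each of the four cases. Type~1 is a double edge between two $\iota$-fixed vertices on opposite sides, forcing the identity, so $|\Aut_\iota(C)|=1$. Type~2 consists of two double-edge components interchanged by $\iota$; besides the identity, only the $\iota$-induced swap is side-preserving and commutes with $\iota$, giving $|\Aut_\iota(C)|=2$. In types~3 and~4, $C$ is a simple even cycle of length $n$ with dihedral automorphism group $D_n$, whose bipartite structure cuts the side-preserving subgroup down to a copy of $D_{n/2}$ generated by even-amount rotations and ``even'' reflections; the involution $\iota$ itself appears inside this subgroup as a specific reflection. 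I would then compute the centralizer of $\iota$ in $D_{n/2}$: a short parity calculation shows it is $\{1,\iota\}$ when $n/2=2k+1$ is odd (type~3, so $|\Aut_\iota(C)|=2$), and the Klein four-group $\{1,\iota,\rho_{n/2},\rho_{n/2}\iota\}$ when $n/2=2k$ is even (type~4, so $|\Aut_\iota(C)|=4$), where $\rho_{n/2}$ denotes the half-rotation.

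Multiplying these cardinalities over $C\in S$ yields
\[
\prod_{C\in S}|\Aut_\iota(C)| \;=\; 1^{c_1}\cdot 2^{c_2}\cdot\prod_k 2^{c_{3,k}}\,4^{c_{4,k,L}}\,4^{c_{4,k,R}} \;=\; 2^{c_2+\sum_k(c_{3,k}+2c_{4,k,L}+2c_{4,k,R})},
\]
and the second displayed equality follows from the first. The main obstacle I expect is the parity bookkeeping inside the dihedral group in types~3 and~4: which elements preserve the bipartition, and which of those then commute with the specific reflection $\iota$, both depend on the parity of $n/2$, and the dichotomy $n=4k+2$ versus $n=4k$ is exactly what distinguishes the two types. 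Once this is handled carefully, the remaining structural decomposition through the split short exact sequence is routine.
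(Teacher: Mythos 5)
Your proposal is correct and follows essentially the same approach as the paper: factor $\Aut_\iota(\Gamma)$ through its action permuting cycles of each type and length (giving the factorials), and multiply by the number of $\iota$-commuting, side-preserving automorphisms within each cycle. Your dihedral-group computation for types 3 and 4 is a more explicit account of what the paper simply asserts (that the within-cycle counts are $1,2,2,4$), and the parity dichotomy you identify—$\iota$ being a vertex-reflection in $D_{n/2}$ with centralizer $\{1,\iota\}$ for $n/2$ odd and Klein four for $n/2$ even—is exactly the right reason.
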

\begin{proof}
The automorphisms in $\Aut_\iota(\Gamma)$ respect the classification of disjoint cycles in $\Gamma$. We can freely permute inside these classifications, hence obtain the factorial terms in the formula. We are left to compute the automorphisms that commute with $\iota$ inside each disjoint cycle of the corresponding type:
 \begin{enumerate}
    \item trivial symmetry
    \item two automorphisms: $\iota$ and the trivial symmetry.
    \item two automorphisms: $\iota$ and the trivial symmetry.
    \item Kleinean group of symmetry, namely we are allowed to swap the two fixed points and also act by $\iota$.\qedhere
\end{enumerate}
\end{proof}

\begin{example}\label{ex:invBthree_graph}
    Let $d = 3$ and assume that $p,q$ both have exactly $1$ real root. Then, from case $3$ in \Cref{ex:invBthree} we know that $\invpthree$ is a square. Its representation in terms of the associated graphs is shown in \Cref{fig:square_deg3}. The graph $\Gamma$ corresponding to either of the bottom vertices consists of one cycle of type 3 with length $6$. The graph automorphisms $\Aut_\iota(\Gamma)$ are just the identity and $\iota$ itself. On the other hand, the graph $\Gamma$ corresponding to either of the top vertices in \Cref{fig:square_deg3} consists of one cycle of type 1 and one pair of cycles of type 2. The graph automorphisms $\Aut_\iota(\Gamma)$ also consist of the identity and $\iota$ itself.
    \begin{figure}[ht]
        \centering
        \includegraphics[width = 0.36\textwidth]{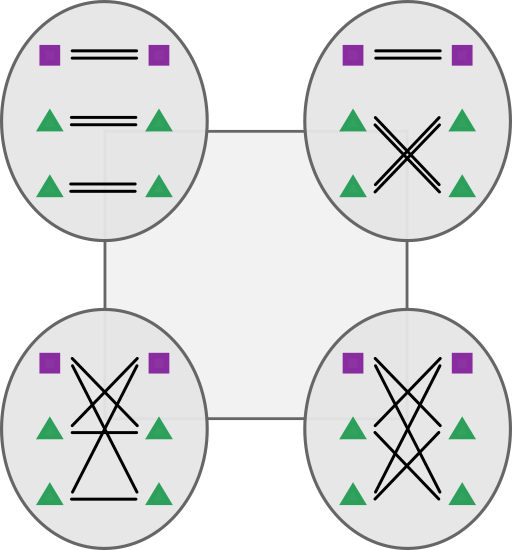}
        \caption{The invariant square $\invpthree$ from \Cref{ex:invBthree_graph}. Its vertices correspond to bipartite graphs $\Gamma$. The purple squares represent real roots, the green triangles represent complex roots. The involution $\iota$ exchanges pairs of green triangles.}
        \label{fig:square_deg3}
    \end{figure}
\end{example}

\subsection{Realization of the vertices}\label{subsec:vertices}
Any vertex of the $\iota$-invariant Birkhoff polytope can be a minimizer for the cost funciton $\cost(M, p, q)= \frac{1}{d} \sum_{1\leq i,j \leq d} M_{ij} \| \alpha_i -  \beta_j \|^2$ for $(p,q)$ in some non-empty open set of monic polynomials without multiple roots. Note that we always consider the roots of the polynomials $p,q$ as ordered. This is implicitly assumed also for the open sets of monic polynomials, since the ordering of the roots of $p$ (resp. $q$) induces a coherent ordering of the roots of polynomials in a sufficiently small neighborhood of $p$ (resp. $q$). In this open neighborhood the involutions $\phi, \psi$ induced by complex conjugation remain the same. 

\begin{proposition}
\label{prop:realization}
Let $\phi,\psi$ be involutions of $\Sym(d)$, $\iota=(\phi,\psi)$, and let $M \in B^\iota_d$ be a vertex.
The set of $p,q \in \R[z]$ that have no multiple roots, factorize as $p(z) = \prod_{i=1}^d (z - \alpha_i)$,
$q(z)=\prod_{j=1}^d (z - \beta_j)$,  with $\overline{\alpha_i} = \alpha_{\phi(i)}, \overline{\beta_i} = \beta_{\psi(i)}$, and for which $M$ is the unique minimizer for $\cost(M,p,q)$ in $B^\iota_d$, is an open non-empty set denoted by $U_{(\phi,\psi)}^M$.
\end{proposition}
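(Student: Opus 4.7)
I split the claim into openness and non-emptiness. For openness, assume $M$ is the unique minimizer of $\cost(\cdot, p_0, q_0)$ on $\invp$, so for every other vertex $M'$ the strict inequality $\cost(M, p_0, q_0) < \cost(M', p_0, q_0)$ holds. Each side is continuous in the root coordinates, and the involution $(\phi,\psi)$ is locally constant on the open set of polynomials with the prescribed conjugation structure; since $\invp$ has finitely many vertices, intersecting finitely many open strict-inequality conditions yields an open neighborhood of $(p_0, q_0)$ inside $U_{(\phi,\psi)}^M$.

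For non-emptiness I construct $(p_0, q_0)$ explicitly from the graph $\Gamma$ of $M$ supplied by \Cref{thm: vertices of gamma}. Index the connected components $C_1, \ldots, C_s$ of $\Gamma$ (individual cycles of types 1, 3, 4, plus pairs of type-2 cycles grouped together), and to each $C_k$ attach a ``cluster'' of roots sitting in a bounded region of $\C$, symmetric under complex conjugation, centered at a distinct real number $c_k$. The centers are chosen very far apart. When the inter-cluster distance $|c_k - c_\ell|$ is much larger than any intra-cluster scale, any $M' \in B_d$ that sends positive mass between distinct clusters has enormous cost and hence cannot be optimal, so the optimization decouples into independent sub-problems, one per cluster.

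Within each cluster I choose the roots so that the restriction $M|_{C_k}$ is the unique minimizer of the cluster's invariant Birkhoff sub-polytope. For types 1 and 2 this sub-polytope is essentially a single point (the only $\iota$-invariant doubly stochastic matrix on the cluster's roots), so the claim is immediate. For a type 3 or 4 cycle $C_k$, an explicit ``cross-over'' configuration works: I place the real roots on one side near the real parts of the complex conjugate pairs on the other side. For the length-$6$ type-$3$ case, with $\alpha = \{0,\, 2+\epsilon i,\, 2-\epsilon i\}$ and $\beta = \{2,\, \epsilon i,\, -\epsilon i\}$, direct evaluation of the cost at the four vertices of the cluster's invariant polytope (in the notation of \Cref{ex:invBthree}, case 3) shows that the target vertex $M|_{C_k}$ strictly beats the other three for every sufficiently small $\epsilon > 0$. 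Analogous explicit placements — nesting the same cross-over idea so that each real root sits near the real parts of the ``next'' complex pair along the cycle — cover longer type-$3$ cycles and type-$4$ cycles.

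The main technical obstacle is verifying uniqueness of the cluster minimizer for type-$3$ and type-$4$ cycles of arbitrary length: one must dominate every competing vertex of the cluster sub-polytope, not only the obvious competitors. I expect to handle this by noting that in the cross-over layout the only pairings $(i,j)$ with small $\|\alpha_i - \beta_j\|^2$ are precisely those appearing as edges of $C_k$, so the linear cost separates cleanly among matchings, and a small generic perturbation of the root positions removes any residual ties without changing the identity of the minimizer. Combining all clusters then produces $(p_0, q_0)$ with distinct roots and the correct conjugation structure for which $M$ is the unique global minimizer, and openness propagates this to the neighborhood $U_{(\phi,\psi)}^M$.
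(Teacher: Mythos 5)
Your high-level strategy coincides with the paper's: openness is the standard strict-inequality argument, and non-emptiness is handled by placing the cycles of $\Gamma$ in widely separated clusters so the problem decouples, then constructing each cluster so that the restriction of $M$ wins there. So the decomposition is correct, and your length-$6$ type-$3$ example (verified by evaluating the cost at the four vertices of the square from \Cref{ex:invBthree}) does work for small $\epsilon>0$.

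However, there are two genuine gaps. First, your claim that for type-$2$ pairs ``this sub-polytope is essentially a single point'' is wrong: a type-$2$ pair of cycles involves two rows $\{i,\phi(i)\}$ and two columns $\{j,\psi(j)\}$, none of them fixed by $\iota$, and every $2\times 2$ doubly stochastic matrix is automatically invariant under the simultaneous row- and column-swap. The cluster's invariant sub-polytope is therefore a segment with two vertices (the ``parallel'' and the ``crossed'' perfect matching), so you still have to place the roots so that your chosen vertex strictly beats the other --- the claim is not immediate. Second, and more seriously, for type-$3$ and type-$4$ cycles of arbitrary length you do not give an explicit construction, only the phrase ``nesting the same cross-over idea'' plus an appeal to a generic perturbation to kill ties. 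This is not a proof: the perturbation argument presupposes that you already know $M$ is a (not necessarily unique) minimizer on the cluster, and that no other vertex ever ties with $M$ after an arbitrarily small perturbation in your direction --- neither is established. The paper closes exactly this gap by laying out each cycle as a closed walk of unit steps in the Eisenstein lattice $\Z[1,\omega]$, $\omega=e^{\pi i/3}$. Then every edge of $\Gamma$ has cost exactly $1$, every non-edge has cost at least $1$ (minimum lattice distance), and the lattice neighbourhood structure forces any competing vertex $N$ with the same cost to reproduce the same assignment; uniqueness is then immediate rather than asymptotic. You would need to produce a concrete layout of this kind (or some other explicit one with a uniform uniqueness certificate) for your non-emptiness argument to be complete.
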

\begin{proof}
The roots of a polynomial vary continuously as its coefficients vary. So, if $M$ is the unique minimizer for the cost function of two polynomials $p,q$ with roots ordered corresponding to the involution $\phi,\psi$, then there is an open set of polynomial pairs around $(p,q)$ such that any polynomial in the pair has distinct roots (ordered to satisfy the involutions $\phi,\psi$), and $M$ is the unique minimizer of the cost function for all polynomial pairs in the open set. The unique minimizer requirement is an open condition, since it means that $\cost(M,p,q)<\cost(M',p,q)$ for any other vertex $M'\in \invp$.

We are left to construct the roots of a pair of polynomials of degree $d$ such that $M$ is the unique minimizer for the pair, to prove that $U_{(\phi,\psi)}^M$ is non-empty. Let $\Gamma$ be the graph associated to $M$. Recall that $\Gamma$ consists of disjoint cycles of four different types with cardinalities $c_1,c_2,c_{3,k},c_{4,k,L},c_{4,k,R}$. Denote the cycles by $C_1,\ldots,C_\ell$. Let $\omega=e^{\frac{\pi i}{3}}$; we will do our construction on the Eisenstein lattice $\mathbb{Z}[1,\omega]$. The idea is to repeat our construction for each disjoint cycle (or pairs of disjoint cycles, if they are of type $2$) and make sure that distinct disjoint cycles are far away from each other.
\begin{itemize}
\item $C_k$ is of type $1$: assign the value $2dk$ and $2dk\!+\!1$ to the corresponding real roots of $p$ and $q$ respectively.
\item $C_k$ is of type $2$: assign the values $2dk\!+\!\omega$, $2dk\!+\!\overline{\omega}$ to the pair of roots of $p$, and $2dk\!+\!\omega\!+\!1$, $2dk\!+\!\overline{\omega}\!+\!1$ to the pair of roots of $q$. In this way, the vertex in $\Gamma$ corresponding to $2dk\!+\!\omega$ connects to the one of $2dk\!+\!\omega\!+\!1$, and analogously  the vertex in $\Gamma$ corresponding to $2dk\!+\!\overline{\omega}$ connects to the one of $2dk\!+\!\overline{\omega}\!+\!1$.
\item $C_k$ is of type $3$: assume its length is $2(2j\!+\!1)$. Assign the values 
$2dk$, $2dk\!+\!\omega$, $2dk\!+\!\omega\!+\!1$, $\ldots,$ $2dk\!+\!\omega\!+\!2j\!-\!1$, $2dk\!+\!2j$, $2dk\!+\!\overline{\omega}\!+\!2j\!-\!1$, $\ldots,$ $2dk\!+\!\overline{\omega}$ to the roots of $p,q$ alternated.
\item $C_k$ is of type $4$: assume its length is $4j$ and that the fixed points are on the left side. Assign the values $2dk$, $2dk\!+\!\omega$, $2dk\!+\!\omega\!+\!1$, $\ldots$, $2dk\!+\!\omega\!+\!2j\!-\!2$, $2dk\!+\!2j\!-\!1$, $2dk\!+\!\overline{\omega}\!+\!2j\!-\!2$, $\ldots,$ $2dk\!+\!\overline{\omega}$ to the roots of $p,q$ alternated.
\end{itemize}
See 
\Cref{fig:eisenstein_realization} for an illustration.
\begin{figure}[ht]
    \centering
\includegraphics[width=\textwidth]{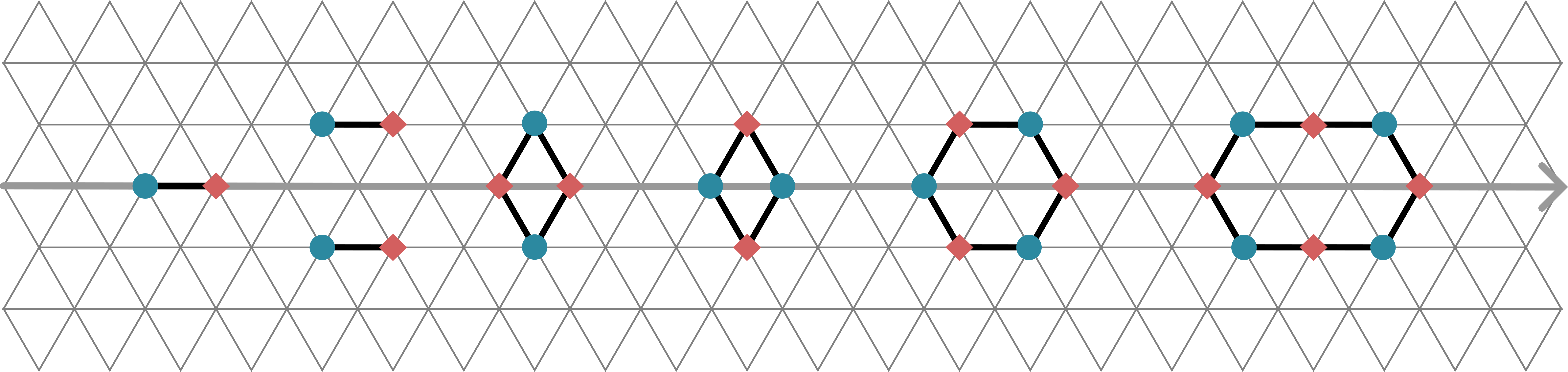}
    \caption{Disjoint cycles in $\Gamma$ as subgraphs of the Eisenstein lattice. From left to right, we have disjoint cycles of types 1, 2 (pair), 4, 4, 3, 4.}
    \label{fig:eisenstein_realization}
\end{figure}
By the above construction, for any pair of $(i,j)$ such that $M_{i,j}\neq 0$, we have $\| \alpha_i -  \beta_j \|=1$. Any two distinct lattice points in the Eisenstein lattice have distance at least $1$, so $M$ is a minimizer for the cost function of $p,q$. 
Suppose there exists another vertex $N\in B_d^{\iota}$ with $\cost(M)~=~\cost(N)$ and denote the graph corresponding to $N$ by $\Gamma^\prime$. Then, $\| \alpha_i -  \beta_j \|=1$ whenever $N_{ij}\neq 0$. Since any two lattice points in disjoint cycles (or pairs, for type $2$) have at least distance $d$, the disjoint cycles of $\Gamma^\prime$ have the same vertices of the disjoint cycles of $\Gamma$. Inside each disjoint cycle (or pairs, for type $2$), the only way to achieve $\| \alpha_i -  \beta_j \|=1$ whenever $N_{ij}\neq 0$ is to assign the roots to $p,q$ exactly as for $M$. Hence, $M=N$ and $M$ is the unique minimizer for the cost function of $p,q$.
\end{proof}

\begin{remark}
    Note that the set $U_{(\phi,\psi)}^M$ is also semialgebraic, since it is a projection of an appropriate semialgebraic sets arising from the conditions that the pairs of factorized polynomials have a coherent enumeration of their roots, and from $M$ being the unique minimizer.
    More explicitly, these conditions are polynomial discriminants and inequalities of type $\cost(M,p,q)\leq\cost(M',p,q)$.
\end{remark}

\section{Algebraic degree formulae}\label{sec:alg_deg}

Based on the combinatorial data of the previous section, here we construct a formal candidate of the minimal polynomial for the (squared) Wasserstein distance. The actual minimal polynomial will always divide the formal candidate. We start by reviewing useful tools from Galois theory.

Consider the ring $ A = \Q[x_1,\dots,x_d,y_1,\dots y_d]$. We denote by $\cdot$ the following group action of
$G = \Sym(d) \times \Sym(d)$ on A:
\begin{align*}
  (g_1,g_2) \cdot x_i &= x_{g_1(i)}, \\
  (g_1,g_2) \cdot y_j &= y_{g_2(j)}.
\end{align*}
Let $\sigma_1, \dots, \sigma_d$ be the elementary symmetric polynomials in the variables $x_1, \dots x_d$, and $\tau_1, \dots, \tau_d$ be the elementary symmetric polynomials in $y_1, \dots y_d$.
Then the following holds.
\begin{lemma}
  The ring of invariants $A^G$ is the polynomial ring $\Q[\sigma_1, \dots, \sigma_d,\tau_1, \dots \tau_d]$. In particular, the generators $\sigma_1, \dots, \sigma_d,\tau_1, \dots \tau_d$ are algebraically independent.
\end{lemma}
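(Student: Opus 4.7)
The plan is to reduce the statement to two applications of the classical fundamental theorem of symmetric polynomials, combined with the fact that the group $G = \Sym(d) \times \Sym(d)$ acts on $A$ as a product of two commuting actions on disjoint sets of variables.

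First, observe that $A$ decomposes as a tensor product $A = B \otimes_\Q C$, where $B = \Q[x_1,\dots,x_d]$ and $C = \Q[y_1,\dots,y_d]$. The subgroup $\Sym(d) \times \{e\}$ acts only on $B$ (trivially on $C$), and $\{e\} \times \Sym(d)$ acts only on $C$. Writing any $f \in A$ uniquely as $f = \sum_\beta f_\beta(x_1,\dots,x_d)\, y^\beta$ with $f_\beta \in B$, one sees that $f$ is $(\Sym(d) \times \{e\})$-invariant if and only if every coefficient $f_\beta$ lies in $B^{\Sym(d)}$. The symmetric argument in the $y$-variables then yields
\[
A^G = B^{\Sym(d)} \otimes_\Q C^{\Sym(d)}.
\]

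Next, invoke the classical fundamental theorem of symmetric polynomials for each factor: $B^{\Sym(d)} = \Q[\sigma_1,\dots,\sigma_d]$ and $C^{\Sym(d)} = \Q[\tau_1,\dots,\tau_d]$, with each set of generators algebraically independent over $\Q$. Taking the tensor product gives
\[
A^G = \Q[\sigma_1,\dots,\sigma_d] \otimes_\Q \Q[\tau_1,\dots,\tau_d] = \Q[\sigma_1,\dots,\sigma_d,\tau_1,\dots,\tau_d],
\]
which proves the first assertion.

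For the algebraic independence of all $2d$ generators together, suppose a polynomial relation $F(\sigma_1,\dots,\sigma_d,\tau_1,\dots,\tau_d) = 0$ held. Expanding $F$ as a polynomial in the $\tau$'s with coefficients in $\Q[\sigma_1,\dots,\sigma_d]$, and viewing both sides as elements of $A = B \otimes C$, the algebraic independence of $\tau_1,\dots,\tau_d$ over the coefficient ring (which follows from their algebraic independence over $\Q$ together with the tensor structure, since the $\tau_i$ involve only $y$-variables while the coefficients involve only $x$-variables) forces each coefficient to vanish, and then the algebraic independence of $\sigma_1,\dots,\sigma_d$ over $\Q$ forces $F \equiv 0$. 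The only mildly nontrivial step is the clean identification $A^G = B^{\Sym(d)} \otimes C^{\Sym(d)}$, but this is immediate from the explicit tensor-basis argument above; everything else is either a direct appeal to the fundamental theorem of symmetric polynomials or a bookkeeping consequence of the product structure of $G$.
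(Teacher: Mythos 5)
Your proof is correct. The paper takes a closely related but not identical route: rather than decomposing $A$ as a tensor product $B\otimes_\Q C$ and computing invariants factor by factor, it filters through the normal subgroup $H = \Sym(d)\times\{e\}$, first obtaining $A^H = \Q[\sigma_1,\dots,\sigma_d,y_1,\dots,y_d]$ via the fundamental theorem of symmetric polynomials over the coefficient ring $\Q[y_1,\dots,y_d]$, and then computing $(A^H)^{G/H}$ by a second application of the relative fundamental theorem over $\Q[\sigma_1,\dots,\sigma_d]$. In that version, algebraic independence of all $2d$ generators comes out in one stroke because the relative fundamental theorem already asserts independence of the new generators over the coefficient ring at each stage. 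Your tensor-product argument makes the separation of the $x$- and $y$-variables explicit and only invokes the fundamental theorem over the base field $\Q$, at the cost of having to stitch together the two algebraic independence statements by hand at the end — which you do correctly, using the fact that $B\otimes_\Q \Q[\tau_1,\dots,\tau_d]$ is a polynomial ring over $B$. Both are sound; the paper's iterated-invariants formulation is a touch more compact, while yours is a bit more self-contained in only using the absolute fundamental theorem.
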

\begin{proof}
    Let us consider the subgroup $H = \Sym(d) \times 1$ of $G$, so $H$ permutes the coordinates $x_i$. By the fundamental theorem of symmetric polynomials (over the ring $\Q[y_1,\dots, y_d]$), we have $A^H = \Q[\sigma_1, \dots \sigma_d, y_1, \dots y_d]$ with algebraically independent generators $\sigma_i, y_j$. Such $H$ is normal in $G$, and $G/H$ acts on $A^H$ by permuting the coordinates $y_i$.
    Using the fundamental theorem of symmetric polynomials over the ring $\Q[\sigma_1, \dots, \sigma_d]$, we obtain
  \[
  A^G = (A^H)^{(G/H)} = \Q[\sigma_1, \dots \sigma_d, y_1, \dots y_d]^{\Sym(d)} =
  \Q[\sigma_1, \dots \sigma_d,\tau_1, \dots \tau_d]
  \] 
  with algebraically independent generators. 
\end{proof}
From this, we get that the inclusion $\Q(\sigma_1, \dots \sigma_d,\tau_1, \dots \tau_d) \subset \Q(x_1,\dots,x_d,y_1,\dots y_d)$ is a Galois field extension with Galois group $G$.
\begin{lemma}
\label{lem:galois_conjugates}
  Let $f \in A$ and let $\{f_1 = f, f_2, \dots, f_k\}$ be the orbit of $f$ under $G$. 
  Consider the following product in $A[t]$:
  \begin{equation}
    h \coloneqq \prod^k_{i=1} (t - f_i).
  \end{equation}
  Then, $h$ actually lies in $A^G[t]$ and it is an irreducible polynomial in $\Q(x_1,\dots,x_d,y_1,\dots y_d)[t]$ of degree $k$ in $t$.
\end{lemma}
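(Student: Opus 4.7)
The plan is to prove the two assertions—$G$-invariance of $h$ and irreducibility—separately, relying on the Galois-theoretic setup just established.

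First I would establish $h\in A^G[t]$. The group $G$ acts on $A[t]$ fixing the variable $t$, so it acts on the expansion $h = \prod_{i=1}^k (t-f_i)$ by permuting the roots in the factorization. Writing the coefficients of $h$ in the variable $t$, they are, up to sign, the elementary symmetric polynomials in $f_1, \dots, f_k$. Since by hypothesis $G$ permutes the orbit $\{f_1, \dots, f_k\}$, these coefficients are $G$-invariant, hence lie in $A^G$. The claim $\deg_t h = k$ is immediate from the construction, since the $f_i$ are pairwise distinct elements of the orbit.

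For irreducibility, I would invoke the fact noted just before the lemma: the extension $L := \Q(x_1,\dots,x_d,y_1,\dots,y_d)$ over $K := \Q(\sigma_1,\dots,\sigma_d,\tau_1,\dots,\tau_d) = \operatorname{Frac}(A^G)$ is Galois with Galois group $G$. The orbit $\{f_1,\dots,f_k\}$ therefore consists precisely of the Galois conjugates of $f_1$ over $K$. Hence $h$ equals the minimal polynomial of $f_1$ over $K$ (it is monic, has $f_1$ as a root, and its degree equals the size of the Galois orbit, which is $[K(f_1):K]$). In particular $h$ is irreducible in $K[t]$. Because $A^G = \Q[\sigma_1,\dots,\sigma_d,\tau_1,\dots,\tau_d]$ is a polynomial ring, hence a UFD, and $h$ is monic, Gauss's lemma upgrades this to irreducibility in $A^G[t]$.

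I expect no genuine obstacle here; the whole content is packaging the standard equivalence ``Galois orbit of $f$'' $=$ ``roots of the minimal polynomial of $f$ over the fixed field''. The only point that requires a moment of care is that the statement as printed mentions irreducibility in $\Q(x_1,\dots,y_d)[t]$, whereas over that larger field $h$ splits as $\prod_i(t-f_i)$; the intended and useful assertion, which is what enters the algebraic-degree formulae for $\Wd(p,q)$ later on, is irreducibility over $A^G$ or equivalently over its fraction field $K$, and that is what the argument above establishes.
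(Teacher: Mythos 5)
Your proof is correct and takes essentially the same approach as the paper's sketch, identifying $h$ with the minimal polynomial of $f$ over the fixed field $K = \Q(\sigma_1,\dots,\sigma_d,\tau_1,\dots,\tau_d)$ of the Galois extension. You are also right to flag the typo in the printed statement: irreducibility is intended over $K[t]$ (equivalently over $A^G[t]$, by Gauss's lemma, since $h$ is monic), not over $\Q(x_1,\dots,x_d,y_1,\dots,y_d)[t]$ where $h$ splits into linear factors; the paper's own argument, that a factor divisible by $t-f_1$ must be divisible by every $t-f_i$ after applying the Galois action, only works when the factors are $G$-invariant, i.e.\ when the factorization is taken in $K[t]$.
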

\begin{proof}
    This is a classical exercise in Galois theory about Galois conjugates, see \cite[p. 573]{DummitFoote04:AbstractAlgebra}.
    We give a proof sketch for convenience.
    The action of $G$ permutes the factors of $h$, so $h$ lies in $(A[t])^G = A^G[t]$.
    Suppose $h= h_1 h_2$ is a factorization of $h$ in $\Q(x_1,\dots,x_d,y_1,\dots y_d)$, and assume without loss of generality that $(t - f_1)$ divides $h_1$ in $\Q(x_1,\dots,x_d,y_1,\dots,y_d)[t]$.
    Applying the Galois action, we get that every linear factor $(t - f_i)$ divides $h_1$. Since the $f_i$ are pairwisely distinct, we get that $h_1$ must have degree $k$ in $t$, so the factorization is trivial.  
\end{proof}

\subsection{Algebraic degrees}

Given two involutions $\phi, \psi \in \Sym(d)$ and a vertex $M$ of $B^\iota_d$, consider the polynomial 
\begin{equation}\label{eqn:f_M}
  f_M  \coloneqq \frac{1}{d} \sum_{1\leq i,j \leq d} M_{ij} (x_i - y_j) (x_{\phi(i)} - y_{\psi(j)})
  \subset \Q[x_1,\dots,x_d,y_1,\dots y_d].
\end{equation}
This polynomial is relevant for us, because of the following observation.

\begin{lemma}
    \label{lem:specialize}
    Consider $p, q \in \R[z]$ of degree $d$ with factorizations
    $p(z) = \prod_{i=1}^d (z - \alpha_i)$, $q(z) = \prod_{i=1}^d (z - \beta_i)$, so that 
    $\overline{\alpha_i} = \alpha_{\phi(i)}, \overline{\beta_i} = \beta_{\psi(i)}$.
    Let $\Phi_{p,q} : \Q[x_1, \dots, x_d, y_1, \dots y_d] \rightarrow \C$ be the map given by
    $x_i \mapsto \alpha_i, y_i \mapsto \beta_i$. Then, for a vertex $M \in \invp$, we have
    \begin{equation*}
            \Phi_{p,q}(f_M) = \cost(M,p,q).
    \end{equation*}
\end{lemma}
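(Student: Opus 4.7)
The statement asserts that the polynomial $f_M$ evaluates, under the specialization $x_i \mapsto \alpha_i$, $y_j \mapsto \beta_j$, to the cost of the transport plan encoded by $M$. The plan is to make this a direct computation, using only the ring-homomorphism property of $\Phi_{p,q}$ together with the defining relations \eqref{eq:phipsi_complconj} that tie $\phi,\psi$ to complex conjugation.

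First I would apply $\Phi_{p,q}$ termwise to \eqref{eqn:f_M}. Since $\Phi_{p,q}$ is a ring homomorphism and the coefficients $M_{ij}/d$ lie in $\Q$, we get
\[
\Phi_{p,q}(f_M) = \frac{1}{d} \sum_{1\leq i,j\leq d} M_{ij} \,\Phi_{p,q}(x_i - y_j)\,\Phi_{p,q}(x_{\phi(i)} - y_{\psi(j)}).
\]
By definition of $\Phi_{p,q}$ the first factor equals $\alpha_i - \beta_j$, while for the second factor the hypothesis $\overline{\alpha_i} = \alpha_{\phi(i)}$ and $\overline{\beta_j} = \beta_{\psi(j)}$, combined with the fact that conjugation is additive, yields
\[
\Phi_{p,q}(x_{\phi(i)} - y_{\psi(j)}) = \alpha_{\phi(i)} - \beta_{\psi(j)} = \overline{\alpha_i} - \overline{\beta_j} = \overline{\alpha_i - \beta_j}.
\]

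The product of a complex number with its conjugate is the squared Euclidean modulus, so each summand becomes
\[
(\alpha_i - \beta_j)\,\overline{(\alpha_i - \beta_j)} = |\alpha_i - \beta_j|^2 = \|\alpha_i - \beta_j\|^2,
\]
and comparison with \eqref{eqn:cost_M} gives $\Phi_{p,q}(f_M) = \cost(M,p,q)$. There is no real obstacle here; the only point worth stressing is why the answer is a priori a real number even though each individual summand in $f_M$ need not be, and this is exactly the pairing of $(i,j)$ with $(\phi(i),\psi(j))$ built into $f_M$ that forces the complex-conjugate factorization to appear after specialization.
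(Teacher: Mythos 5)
Your proof is correct and follows the same approach as the paper: apply $\Phi_{p,q}$ termwise to $f_M$, use $\overline{\alpha_i}=\alpha_{\phi(i)}$, $\overline{\beta_j}=\beta_{\psi(j)}$ to recognize $(\alpha_i-\beta_j)(\alpha_{\phi(i)}-\beta_{\psi(j)})$ as $\|\alpha_i-\beta_j\|^2$, then compare with \eqref{eqn:cost_M}. You have merely spelled out the one-line computation the paper leaves to the reader.
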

\begin{proof}
    This follows directly from $\|\alpha_i - \beta_j\|^2 = (\alpha_i - \beta_j) \overline{(\alpha_i - \beta_j) } =  (\alpha_i - \beta_j)(\alpha_{\phi(i)} - \beta_{\psi(j)})$ and comparing \cref{eqn:cost_M,eqn:f_M}.
\end{proof}
We also have an action of $G$ on $[d] \times \{L, R\}$ via:
\begin{align*}
  (g_1,g_2) \cdot (i, L) &= (g_1(i), L), \\
  (g_1,g_2) \cdot (j, R) &= (g_2(j), R).
\end{align*}
By abuse of notation, we identify the above action of $G$ on $[d]\times \{L,R\}$ with the resulting permutation group.
For $M \in \invp$ and $\Gamma$ associated to $M$ as in \Cref{subsec:graph}, we then view $\Aut_\iota(\Gamma)$ as a subgroup of $G$. In fact we have the following.

\begin{lemma}
For $g \in G$, $g(f_M) = f_M \Leftrightarrow g \in \Aut_\iota(\Gamma)$, where
$\Gamma$ is defined as in \Cref{subsec:graph}.
\end{lemma}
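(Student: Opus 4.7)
The plan is to expand $f_M$ into a canonical monomial form and compare coefficients using the algebraic independence of the indeterminates $x_i, y_j$. First, expanding the product $(x_i - y_j)(x_{\phi(i)} - y_{\psi(j)})$ term by term and using the row/column sum identities $\sum_j M_{ij} = \sum_i M_{ij} = 1$ together with the $\iota$-invariance $M_{ij} = M_{\phi(i), \psi(j)}$, one obtains the simplified form
\[
f_M = \frac{1}{d}\Big( \sum_i x_i x_{\phi(i)} + \sum_j y_j y_{\psi(j)} - 2 \sum_{i,j} M_{\phi(i), j}\, x_i y_j \Big).
\]
In the mixed sum, the two cross terms coming from $-x_i y_{\psi(j)}$ and $-y_j x_{\phi(i)}$ coalesce after a change of indices, using $\iota$-invariance of $M$ in one of them.

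For the direction ``$\Leftarrow$'', I assume $g = (g_1, g_2) \in \Aut_\iota(\Gamma)$. Unpacking: commutation with $\iota$ as a permutation of $[d]\times\{L,R\}$ gives $g_1 \phi = \phi g_1$ and $g_2 \psi = \psi g_2$, while preservation of the adjacency of $\Gamma$ gives $M_{g_1(i), g_2(j)} = M_{ij}$ for all $i, j$. Substituting $x_i \mapsto x_{g_1(i)}$, $y_j \mapsto y_{g_2(j)}$ in the simplified form of $f_M$ and re-indexing each of the three sums via these relations yields $g \cdot f_M = f_M$.

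For the converse direction ``$\Rightarrow$'', I assume $g \cdot f_M = f_M$. Since the monomials $\{x_i x_k\} \cup \{y_j y_l\} \cup \{x_i y_j\}$ that appear are linearly independent in $A$, the equation holds coefficient by coefficient. Matching the coefficient of $x_i x_k$ (both when $i = k$ and when $i \neq k$) forces $g_1 \phi g_1^{-1} = \phi$; analogously, matching $y_j y_l$ coefficients gives $g_2 \psi g_2^{-1} = \psi$. Finally, matching coefficients of $x_i y_j$ yields $M_{\phi g_1^{-1}(i), g_2^{-1}(j)} = M_{\phi(i), j}$ for all $i,j$; using $g_1 \phi = \phi g_1$ and the involutivity of $\phi$ (substitute $i = \phi(k)$), this simplifies to $M_{k, g_2^{-1}(j)} = M_{g_1(k), j}$, equivalently $M_{g_1(k), g_2(j)} = M_{k, j}$ for all $k,j$. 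These three conclusions are precisely the defining conditions for $g \in \Aut_\iota(\Gamma)$.

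The main obstacle is the bookkeeping in the first step: getting the factor of $2$ in front of the mixed sum $\sum_{i,j} M_{\phi(i), j}\, x_i y_j$ requires combining the doubly stochastic property of $M$ with the $\iota$-invariance $M_{ij} = M_{\phi(i), \psi(j)}$. Once the canonical form of $f_M$ is in hand, both directions follow from a routine matching of coefficients.
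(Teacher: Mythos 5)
Your proof is correct and takes essentially the same route as the paper's: both reduce $f_M$ to the canonical form $\tfrac{1}{d}\bigl(\sum_i x_i x_{\phi(i)} + \sum_j y_j y_{\psi(j)} - 2\sum_{i,j} M_{\phi(i),j}\,x_i y_j\bigr)$ (the paper does this via the multidegree decomposition $f_M = f_{M,2,0} + f_{M,1,1} + f_{M,0,2}$), match coefficients to extract $g_1\phi = \phi g_1$ and $g_2\psi = \psi g_2$ from the pure-degree parts, and then use that commutation to reduce the mixed-degree condition to $M_{g_1(i),g_2(j)} = M_{ij}$. One small imprecision: the factor of $2$ in the mixed term comes solely from $\iota$-invariance $M_{\phi(i)j} = M_{i\psi(j)}$, not from the doubly stochastic property (the latter is what collapses the pure-degree sums).
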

\begin{proof}
    We first decompose $f_M$ into three components: $f_M$ is a quadratic polynomial in the variables $x_1,\dots x_d, y_1, \dots y_d$. We can decompose
    $f_M = f_{M,2,0} + f_{M,1,1} + f_{M,0,2}$ by the multidegree in $x_1, \dots x_d$ and $y_1, \dots y_d$. The action of $G$ preserves multidegrees, so $g \in G$ fixes $f_M$ if it fixes all summands in the multidegree decomposition.

    Let us first show that $g(f_{M,2,0}) = f_{M,2,0}$ for $g = (g_1, g_2)$ if and only if $g_1$ commutes with $\phi$. Indeed, we have
    $f_{M,2,0} = \frac{1}{d}\sum_{1\leq i,j \leq d} M_{ij} x_i x_{\phi(i)} = \frac{1}{d}\sum_{1 \leq i \leq d} x_i x_{\phi(i)}$ since $M$ is doubly stochastic. So the coefficient of $x_i^2$ is $\frac{1}{d}$ if $i$ is a fixed point of $\phi$, and $0$ otherwise. 
    For $x_i x_j$ with $i \neq j$ the coefficient is $
    \frac{2}{d}$ if $i$ and $j$ are interchanged by $\phi$, and $0$ otherwise. So $g(f_{M,2,0}) = f_{M,2,0}$ if and only if $g_1$ sends fixed points of $\phi$ to fixed points of $\phi$, and pairs interchanged by $\phi$ to pairs interchanged by $\phi$; namely, if and only if $g_1$ commutes with $\phi$.
    We can see similarly that $g(f_{M,0,2}) = f_{M,0,2}$ if and only if $g_2$ commutes with $\psi$.

    Suppose now that $g \in G$ commutes with $\iota$. Let us show that $g(f_{M,1,1}) = f_{M,1,1}$ if and only if $g \in \Aut_\iota(\Gamma)$.
    Since $M\in \invp$, we can write
    \begin{equation*}
        f_{M,1,1} = - \frac{1}{d} \sum_{1\leq i,j \leq d} M_{ij} (x_i y_{\psi(j)} + x_{\phi(i)} y_j) =  - \frac{1}{d} \left( \sum_{1\leq i,j \leq d} M_{ij} x_i y_{\psi(j)} +  \sum_{1\leq i,j \leq d} M_{ij} x_{\phi(i)} y_j \right).
    \end{equation*}
    Now we can substitute $j \mapsto \psi(j)$ in the first sum and $i \mapsto \phi(i)$ in the second sum
    to obtain
    \begin{equation*}
        f_{M,1,1} = - \frac{1}{d} \sum_{1\leq i,j \leq d} (M_{i\psi(j)} + M_{\phi(i)j}) x_i y_j = - \frac{2}{d} \sum_{1\leq i,j \leq d} M_{i\psi(j)} x_i y_j ,
    \end{equation*}
    where the last equality comes from $M \in \invp$.
    Hence, for $g = (g_1, g_2) \in G$, we have
    \begin{equation*}
        g(f_{M,1,1}) = - \frac{2}{d} \sum_{1\leq i,j \leq d} M_{i\psi(j)} x_{g_1(i)} y_{g_2(j)} = - \frac{2}{d} \sum_{1\leq i,j \leq d} M_{g_1^{-1}(i)\psi(g_2^{-1}(j))} x_i y_j.
    \end{equation*}
    Therefore, $g(f_{M,1,1}) = f_{M,1,1}$ if and only if $ M_{i\psi(j)} = M_{g_1^{-1}(i)\psi(g_2^{-1}(j))}$ for all $i, j$. Since $g$ commutes with $\iota$, then $\psi$ commutes with $g^{-1}_2$. We can substitute $j \mapsto \psi(j)$ to see that $g(f_{M,1,1})=f_{M,1,1}$ is equivalent to $g$ preserving $\Gamma$.
\end{proof}
Let us denote
\begin{equation}\label{eqn:galois_orbit}
    h_M \coloneqq \prod_{g \in G/\Aut_\iota(\Gamma)} (g(f_M) - t) \in A[t].
\end{equation}
By \Cref{lem:galois_conjugates}, $h_M$ is irreducible in $\Q(x_1,\dots,x_d,y_1,\dots y_d)[t]$ and it has degree
$\frac{(d!)^2}{|\Aut_\iota(\Gamma)|}$.
We obtain the following result.
\begin{theorem}\label{thm:algebraic_degree}
    Let $p,q \in \Q[z]$ be monic polynomials of degree $d$, each with distinct roots. Let $\phi, \psi, \iota$ be as above. Suppose $M \in B^\iota_n$ is a minimizing vertex for $\cost(M, p, q)$
    and $\Gamma$ is the graph associated to $M$.
    Let $\Phi_{p,q} : \Q[x_1, \dots, x_d, y_1, \dots, y_d, t]  \rightarrow \C[t]$ be the extension of the map defined in \Cref{lem:specialize}, which by an abuse of notation we also denote it by $\Phi_{p,q}$.
    Then, $\Phi_{p,q}(h_M) \in \Q[t]$ and $\Wd(p,q)$ is a root of $\Phi_{p,q}(h_M)$.

    In particular, the algebraic degree of $\Wd(p,q)$ is bounded by
    $\frac{(d!)^2}{|\Aut_\iota(\Gamma)|}$.
    \label{thm:upper_bound}
\end{theorem}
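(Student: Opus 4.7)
The plan is to combine three ingredients already prepared in the paper: Galois invariance of the product $h_M$, the specialization property of $\Phi_{p,q}$ from \Cref{lem:specialize}, and the defining property of $M$ as a minimizer. First I would check rationality of $\Phi_{p,q}(h_M)$. The polynomial $h_M$ defined in \eqref{eqn:galois_orbit} is $G$-invariant as a polynomial in $t$ by construction, since $G$ merely permutes the factors $(g(f_M) - t)$. Applying \Cref{lem:galois_conjugates} together with the identification $A^G = \Q[\sigma_1,\ldots,\sigma_d,\tau_1,\ldots,\tau_d]$, I conclude that $h_M$ lies in $\Q[\sigma_1,\ldots,\sigma_d,\tau_1,\ldots,\tau_d][t]$. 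Since $\Phi_{p,q}$ sends each $\sigma_i$ to the $i$-th elementary symmetric function of $\alpha_1,\ldots,\alpha_d$, which up to sign equals a coefficient of the monic rational polynomial $p$ (and similarly for $\tau_j$ and $q$), the image $\Phi_{p,q}(h_M)$ lies in $\Q[t]$.

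Next I would locate $\Wd(p,q)$ among the roots. Taking the identity coset in the product \eqref{eqn:galois_orbit}, the corresponding factor $(f_M - t)$ maps under $\Phi_{p,q}$ to $\cost(M,p,q) - t$ by \Cref{lem:specialize}; since $M$ is assumed to be a minimizing vertex, $\cost(M,p,q) = \Wd(p,q)$, so $\Wd(p,q)$ is a root of $\Phi_{p,q}(h_M)$. The degree bound is then immediate: $h_M$ is monic in $t$ of degree $|G/\Aut_\iota(\Gamma)| = (d!)^2/|\Aut_\iota(\Gamma)|$, and $\Phi_{p,q}$ preserves the monic leading coefficient, so $\Phi_{p,q}(h_M)$ is a nonzero monic rational polynomial of the same degree in $t$. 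Hence the minimal polynomial of $\Wd(p,q)$ over $\Q$ divides $\Phi_{p,q}(h_M)$, yielding $\Wdeg(p,q) \leq (d!)^2 / |\Aut_\iota(\Gamma)|$.

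The argument is essentially a direct assembly of the earlier results, so there is no deep obstacle. The only subtle point worth flagging is that $\Phi_{p,q}(h_M)$ need not remain irreducible over $\Q$, even though $h_M$ is irreducible in $\Q(x_1,\ldots,x_d,y_1,\ldots,y_d)[t]$ by \Cref{lem:galois_conjugates}; this is precisely why the theorem provides only an upper bound on $\Wdeg(p,q)$ rather than an equality, and it motivates the refined analysis needed for \Cref{thm: bound sharp on dense set}.
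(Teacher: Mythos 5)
Your proposal is correct and follows the same route as the paper's (very terse) proof: rationality via $h_M \in A^G[t] = \Q[\sigma,\tau][t]$ and Vieta's formulae, the identity-coset factor $(f_M-t)$ specializing to $\cost(M,p,q)-t = \Wd(p,q)-t$ by \Cref{lem:specialize}, and the degree count $|G/\Aut_\iota(\Gamma)|$. One trivial slip: with the sign convention of \eqref{eqn:galois_orbit}, $h_M$ has leading coefficient $(-1)^k$ in $t$ rather than being monic, but since $\Phi_{p,q}$ fixes $t$ the degree and the divisibility argument go through unchanged.
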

\begin{proof}
    By Vieta's formulae, $\Phi_{p,q}$ maps $\sigma_i$ and $\tau_j$ (up to sign) to coefficients of $p$ and $q$. So $\Phi_{p,q}(A^G) \subset \Q$. The rest follows from the above construction.
\end{proof}

Let us show that this bound is sharp on a large set. Recall the open set of polynomial pairs $U_{(\phi,\psi)}^M$ defined in \Cref{prop:realization}, where polynomial roots are ordered satisfying the involution $\phi,\psi$ and $M$ is the unique minimizer for the cost function. 

\begin{theorem}\label{thm: bound sharp on dense set}
Let $\phi,\psi$ be a pair of involutions of $\Sym(d)$, $\iota=(\phi,\psi)$ be the involution of $\Sym(d)\times \Sym(d)$ and $M \in B^\iota_d$ be a vertex. 
There exists a dense set $U\subseteq U_{(\phi,\psi)}^M$ of polynomial pairs $(p, q)$ such that $\Phi_{p,q}(h_M)$ is irreducible.
In particular, the algebraic degree of $\Wd(p,q)$ is equal to $\frac{(d!)^2}{|\Aut_\iota(\Gamma)|}$ for these polynomials, so the bound of \Cref{thm:upper_bound} is sharp.
\end{theorem}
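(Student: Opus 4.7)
The plan is to invoke Hilbert's irreducibility theorem applied to $h_M$. First, by its construction in \eqref{eqn:galois_orbit} as a product over a $G$-orbit, the coefficients of $h_M$ as a polynomial in $t$ lie in the invariant ring $A^G = \Q[\sigma_1,\dots,\sigma_d,\tau_1,\dots,\tau_d]$. By \Cref{lem:galois_conjugates}, $h_M$ is irreducible in $\Q(x_1,\dots,y_d)[t]$, and since a polynomial whose coefficients lie in a subfield and is irreducible over the ambient field must be irreducible over the subfield, $h_M$ is also irreducible in $\Q(\sigma_1,\dots,\tau_d)[t]$. Viewing the $\sigma_i,\tau_j$ as $2d$ independent indeterminates, $h_M$ becomes an irreducible element of $\Q[\sigma_1,\dots,\tau_d,t]$, separable in $t$ of degree $\tfrac{(d!)^2}{|\Aut_\iota(\Gamma)|}$.

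Next, Vieta's formulae identify the space of monic real polynomial pairs $(p,q)$ of degree $d$ with $\R^{2d}$; under this identification, $\Phi_{p,q}$ is evaluation of $(\sigma_1,\dots,\tau_d)$ at the coefficients of $(p,q)$ (up to sign), and by \Cref{prop:realization} the set $U_{(\phi,\psi)}^M$ becomes a nonempty open semialgebraic subset of $\R^{2d}$. Hilbert's irreducibility theorem, applied to $h_M$ as an irreducible polynomial over $\Q(\sigma_1,\dots,\tau_d)$, produces a Hilbert set $H \subseteq \Q^{2d}$ of rational specializations at which $\Phi_{p,q}(h_M) \in \Q[t]$ remains irreducible, with complement thin in $\mathbb{A}^{2d}_{\Q}$. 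Using the structural description of thin sets as contained in a countable union of proper algebraic subvarieties of $\R^{2d}$ (and images of ramified covers), each of which is nowhere-dense in the usual topology, one concludes that $H$ is dense in every nonempty open subset of $\R^{2d}$. Setting $U := H \cap U_{(\phi,\psi)}^M$ therefore yields a dense subset of $U_{(\phi,\psi)}^M$ as required.

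Finally, for any $(p,q) \in U$, \Cref{thm:algebraic_degree} tells us that $\Wd(p,q)$ is a root of $\Phi_{p,q}(h_M) \in \Q[t]$, which is irreducible by construction. Up to a nonzero scalar, $\Phi_{p,q}(h_M)$ is then the minimal polynomial of $\Wd(p,q)$ over $\Q$, so $\Wdeg(p,q) = \deg_t h_M = \tfrac{(d!)^2}{|\Aut_\iota(\Gamma)|}$, matching the bound in \Cref{thm:upper_bound}. The main technical point lies in the middle step: translating the arithmetic content of HIT (Zariski density of $H$ in $\mathbb{A}^{2d}_\Q$) into topological density inside the open semialgebraic set $U_{(\phi,\psi)}^M \subseteq \R^{2d}$. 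This follows from the fact that thin sets lie in a meager union of proper real algebraic subsets, of Lebesgue measure zero, and hence cannot cover any nonempty open subset; I would cite a standard reference on HIT (e.g., Serre's \emph{Topics in Galois Theory}) for this transfer rather than reprove it.
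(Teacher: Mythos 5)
Your proof takes essentially the same route as the paper: both apply Hilbert's irreducibility theorem to $h_M$ viewed as an irreducible polynomial in $t$ over $\Q(\sigma_1,\dots,\sigma_d,\tau_1,\dots,\tau_d)$, and then intersect the resulting dense set of good specializations with the open set $U_{(\phi,\psi)}^M$; the extra detail you supply on why Hilbert sets are dense in the Euclidean topology of $\R^{2d}$ is a correct elaboration of what the paper leaves to the cited references. One small slip worth noting: $h_M$ is \emph{not} irreducible in $\Q(x_1,\dots,y_d)[t]$ --- it splits completely there, since all the conjugates $g(f_M)$ lie in $A \subset \Q(x_1,\dots,y_d)$ --- so the chain ``irreducible over $\Q(x_1,\dots,y_d)$, hence over the subfield $\Q(\sigma_1,\dots,\tau_d)$'' starts from a false premise; what \Cref{lem:galois_conjugates} actually establishes (and what you, and the paper, need for HIT) is precisely irreducibility over the fixed field $\Q(\sigma_1,\dots,\tau_d)$, which is where you land anyway.
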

\begin{proof}
    The extended map $\Phi_{p,q} : \Q[\sigma_1, \dots, \sigma_d, \tau_1, \dots, \tau_d,t] \rightarrow \Q[t]$, is the specialization map sending $\sigma_i$ and $\tau_j$ (up to sign) to the coefficients of $p$ and $q$. We know that $h_M \in A^G[t]$ is irreducible by \Cref{lem:galois_conjugates}.  
    By Hilbert's irreducibility theorem (see e.g. \cite[Theorem 46]{Schinzel00:PolyRed} or \cite[Chapter 9, Corollary 2.5]{Lang83:FunDiophantine}),
    the specialization $\Phi_{p,q}(h_M)$ is irreducible for a dense set of monic polynomials $p,q \in \Q[z]$. In particular, on the open set $U_{(\phi,\psi)}^M$ that admits $M$ as the unique minimizer, we have that $\Phi_{p,q}(h_M)$ is the minimal polynomial of $\Wd(p,q)$ for a dense open subset $U$ of $U_{(\phi,\psi)}^M$. Hence, our bound in \Cref{thm:upper_bound} is sharp on $U$.
\end{proof}

\begin{example}
Suppose $p,q\in \mathbb{Q}[z]$ are generic monic polynomials of degree $d$, both with only real roots. The only possible type (cf. Theorem~\ref{thm: vertices of gamma}) for the disjoint cycles of the graph associated to vertices of $B_d^{\iota}$ is type $1$. Therefore, we get $c_1=d$, $c_2=c_{3,k}=c_{4,k,L}=c_{4,k,R}=0$ and $|\Aut_\iota(\Gamma)|=c_1!=d!$.
The algebraic degree of $W_2^2(p,q)$ is 
\[
\Wdeg(p,q) = \frac{(d!)^2}{d!}=d!,
\]
for generic $p,q$, by \Cref{thm: bound sharp on dense set}.
\end{example}

For degree $1$ polynomials, $\Wdeg = 1$ trivially. When $d = 2$, if $(p_\R,p_\C) = (q_\R,q_\C)$ and $p,q$ are generic, then $\Wdeg = 2$. In the remaining cases, the Wasserstein distance degree is $1$. The values of the Wasserstein distance degree for $d = 3,4$ are displayed in \Cref{table:WDdegrees}. Degree $4$ is the first interesting case where polynomials with the same number of real and complex roots give rise to multiple open sets where we get different algebraic degrees for $\Wd(p,q)$.
\begin{table}[ht]
    \centering
    {\def\arraystretch{1.3}
    \begin{tabular}{>{\centering\arraybackslash}m{1.7cm} |>{\centering\arraybackslash}m{3cm} |>{\centering\arraybackslash}m{3cm}  }
    degree & $(p_\R,p_\C), (q_\R,q_\C)$ & $\Wdeg(p,q)$ \\
    \hline
    \hline
    3 & (3,0), (3,0) & 6 \\
    3 & (3,0), (1,1) & 9 \\
    3 & (1,1), (1,1) & 18 \\
    4 & (4,0), (4,0) & 24 \\
    4 & (4,0), (2,1) & 72 \\
    4 & (4,0), (0,2) & 18 \\
    4 & (2,1), (2,1) & 36, 144, 288 \\
    4 & (2,1), (0,2) & 18, 288 \\
    4 & (0,2), (0,2) & 72 \\
    \end{tabular}
    }
    \caption{Values of $\Wdeg(p,q)$ for a dense set of pairs $(p,q)$.}
    \label{table:WDdegrees} 
\end{table}

Note that \Cref{thm:algebraic_degree} gives the algebraic degree of the squared Wasserstein distance for a dense set of pairs of polynomials. However, in special cases, it provides only an upper bound, as exhibited in the next example.

\begin{example}\label{ex:nongeneric}
    Consider $p = z^3-1$ with only one real root and $q = z^3 - 5z^2 + 4z + 3$ with three real roots. Let $\iota$ be the associated involution and $M$ the optimal vertex of $B^{\iota}_3$. \Cref{thm:algebraic_degree} predicts an upper bound $\Wdeg(p,q) \leq \deg h_M = 9$. Indeed, the minimal polynomial of the squared Wasserstein distance in this case is
    \[
    27t^3 - 540t^2 + 3483t - 7231,
    \]
    hence the Wasserstein distance degree is $3$.
\end{example}

\begin{remark}
    If $p,q \in \Q[z]$ (with possibly multiple roots), and $(p,q) \in \overline{U_{(\phi,\psi)}^M}$, then we still get that $\Wd(p,q)$ is a root of $\phi_{p,q}(h_M)$, so we still obtain a bound for the Wasserstein distance degree. However, this bound is far from being sharp in general. 
\end{remark}

\subsection{Wasserstein distance degree via elimination}\label{subsec:elimination}

While the Wasserstein distance degree is studied from combinatoric perspective, the algebraic degree of two specific polynomials can also be understood via elimination of ideals. We describe here how to compute the minimal polynomial of the Wasserstein distance of two given polynomials via elimination. 

Let $p,q \in \Q[z]$ be polynomials of degree $d$; we denote by $p_i,q_i$ the coefficient of the monomial $z^i$ in the univariate polynomial $p,q$ respectively. Consider the ideal in $\Q[x_1,\ldots,x_d,y_1,\ldots,y_d]$ generated by Vieta's formulae, i.e., all the relations between the elementary symmetric polynomials in the roots of $p,q$ and their coefficients:
\[
I = \left\langle \sum_{i=1}^d x_i + \frac{p_{d-1}}{p_d}, \ldots , \prod_{i=1}^d x_i - (-1)^d \frac{p_0}{p_d}, \sum_{i=1}^d y_i + \frac{q_{d-1}}{q_d}, \ldots , \prod_{i=1}^d y_i - (-1)^d \frac{q_0}{q_d} \right\rangle .
\]
Assume that the roots of $p,q$ are simple.
Then, the ideal $I$ is zero dimensional. The variety associated to it consists of all tuples of points in $\C^{2d}$ such that the first $d$ coordinates are the roots of $p$, and the last $d$ coordinates are the roots of $q$.
Counting all permutations of the two sets of coordinates, we get that $\deg I = (d!)^2$. Notice that in the case where $p,q$ are allowed to have double roots, the degree of $I$ decreases but $(d!)^2$ remains a valid upper bound.

Assume that the minimizer of the cost function $\cost(\cdot,p,q)$ is attained at the vertex $M$ of $\invp$, and let $h_M$ be the invariant irreducible polynomial defined in \eqref{eqn:galois_orbit}.
Consider a second ideal
\[
J = I +  \left\langle h_M(x,y,t) \right\rangle \subset \Q[x_1,\ldots,x_d, y_1,\ldots,y_d,t],
\]
that has the same dimension and degree as $I$.
Then, the elimination ideal $E = J \cap \Q[t]$ has one generator, namely $\Phi_{p,q}(h_M)(t)$, where $\Phi_{p,q}$ is the specialization map sending $x$ to the roots of $p$ and $y$ to the roots of $q$. During the elimination process or, more geometrically, during this projection, some of the $(d!)^2$ points in the variety of $J$ get mapped to the same point. The polynomial $\Phi_{p,q}(h_M)(t)$ obtained contains as a factor the minimal polynomial of $W_2^2(p,q)$. For a dense set of pairs $(p,q)$ it is actually irreducible and its degree is the Wasserstein distance degree of $p,q$.
 
The method described above is used to compute \Cref{ex:intro,ex:nongeneric}.

\paragraph{Acknowledgements.}
We would like to thank Bernd Sturmfels who made this work possible. We also thank Sarah-Tanja Hess for interesting discussions.

\small
\bibliographystyle{alpha}
\bibliography{biblio}

\vfill
\footnotesize{

\noindent \textsc{Chiara Meroni} \\
\textsc{ Harvard University \\
29 Oxford Street, Cambridge, MA 02138} \\
\url{cmeroni@seas.harvard.edu} \\

\noindent \textsc{Bernhard Reinke}\\
\textsc{Max Planck Institute for Mathematics in the Sciences,\\ 
Inselstrasse 22, Leipzig, 04107} \\
\url{bernhard.reinke@mis.mpg.de} \\

\noindent \textsc{Kexin Wang} \\
\textsc{ Harvard University \\
29 Oxford Street, Cambridge, MA 02138} \\
\url{kexin_wang@g.harvard.edu} \\
}

\end{document}